\DeclareMathOperator{\ad}{ad}
\DeclareMathOperator{\Tr}{Tr}
\newtheorem{thm}{Theorem}[section]
\newtheorem{prop}[thm]{Proposition}
\newtheorem{lem}[thm]{Lemma}
\newtheorem{Cor}[thm]{Corollary}
\newtheorem{conj}[thm]{Conjecture}
\title{Lie algebra of  column-finite infinite matrices: ideals and derivations}
\author{Waldemar Ho{\l}ubowski\footnote{Corresponding author, Email address: w.holubowski@polsl.pl}  and Sebastian \.Zurek}
\date{}
\begin{document}

\maketitle

Silesian University of Technology, Faculty of Applied Mathematics\\
Kaszubska 23, 44-101 Gliwice, Poland

\begin{abstract}
In this paper we shall consider the Lie algebra of column-finite infinite matrices indexed by positive integers $\mathbb{N}$,  describe the lattice of its ideals for arbitrary field $K$  and study its derivations over any commutative, unital ring $R$.
\end{abstract}

Keywords: infinite-dimensional Lie algebra, infinite matrices, ideals, derivations \\
2000 MSC: 17B20, 17B65


\section{Introduction}
The works of S.~Lie, W.~Killing and E. Cartan were the starting points for systematic development of the theory of  finite-dimensional Lie algebras. We mention here the classification of finite dimensional simple Lie algebras over the algebraically closed fields (for fields of characteristic $0$ due to E.~Cartan and W.~Killing and for characteristic $p>3$ given in works of R.~E.~Block, R.~L.~Wilson,  H.~Strade, A.~Premet) and the representation theory (the highest weight classification of irreducible modules of general linear Lie algebras). The nice exposition can be found in \cite{Bou1},  \cite{Bou2},  \cite{Bou3}, \cite{Jac62}, \cite{Hum72}, \cite{Str13}.

However at the present time there is no general theory of the infinite-dimensional Lie algebras, they have been investigated only in special settings. There are few classes of   infinite-dimensional Lie algebras that were
more or less intensively studied from the geometric point of view.
In preface to his monography \cite{Kac90}  V.~Kac consider as the first class the Lie algebras of vector fields which have many geometric applications (cohomology theory  \cite{FT83}, \cite{Fuks86}.)
The second class  consists of Lie algebras of smooth mappings of a given manifold into a finite-dimensional Lie algebra (some central extensions of these algebras are referred as current algebras).
The third class consists of the classical Lie algebras of operators in a Hillbert or Banach space \cite{delaHar72}.
Nowadays, there are studied intensively the  Kac-Moody algebras defined by generalized Cartan matrices (the fourth class), which have many applications in modern physics \cite{Kac90}, \cite{Bau90}, \cite{Bau97}.

Algebraic point of view was used in investigations of free Lie algebras and graded Lie algebras. In the book by Amayo and Stewart \cite{AS74} the results on the lattice structure of infinite-dimensional Lie algebras are surveyed. They translate some group theoretic definitions and results on infinite discrete groups into the language of Lie algebras. They provide results on soluble ideals, locally nilpotent radicals, Lie algebras with chain or other finitness restrictions, Frattini theory, Engel conditions, etc.. This shows the deep parallel of both theories however the proofs may differ, because Lie algebras depend on underlying field. J. E. Humphreys wrote: "group theorists will correspondingly find the book useful and attractive, though its appeal to the general reader will be more limited."  \cite{Hum74}.

 In many papers appear, as examples, the Lie algebras of infinite matrices.
 We mention here the Lie algebra $\mathfrak{g}\mathfrak{l}_{\infty}$  of $\mathbb{Z} \times \mathbb{Z}$ infinite matrices over $\mathbb{C}$ which have only finite number of nonzero entries:
 $$\mathfrak{g}\mathfrak{l}_{\infty}= \{ A=(a_{ij})_{i,j \in \mathbb{Z}} :  \,\,{\rm all \,\,but\,\, the\,\, finite\,\, number \,\,of}\,\, a_{ij} \,\,\ {\rm are} \,\,0\} $$

 \noindent It has a basis consisting of matrices $E_{ij}$ which have $1$ in $(i,j)$-entry and zeros elsewhere and so it is countably dimensional. Moreover, $\mathfrak{g}\mathfrak{l}_{\infty}$ is a subalgebra of $\mathfrak{g}\mathfrak{l}_{J}$ --  the Lie algebra of generalized Jacobian matrices, i.e. infinite matrices having nonzero entries in finite number of diagonals, introduced in \cite{KP81}, 
 \cite{DJKM81}:
 \begin{displaymath}
 \mathfrak{g}\mathfrak{l}_{J}= \{ A=(a_{ij})_{i,j \in \mathbb{Z}} :  \,\,\ a_{ij} = 0 \,\,\,\ {\rm for\,\, all} \,\,\,| i-j| >> 0 \}. 
\end{displaymath}

\noindent    The last algebra is denoted in \cite{KacB13} as $\bar{a}_{\infty}$ and is uncountably dimensional.   The representations of $\mathfrak{g}\mathfrak{l}_{\infty}$ play important role in soliton's theory . A central extension $a_{\infty}$ of $\bar{a}_{\infty}$ has also many applications in physics. It contains for example any affine Kac-Moody algebra and Virasoro algebra \cite{KacB13}.

We mention here also a work of A. A. Baranov describing simple finitary Lie algebras over an algebraically closed fields of characteristic $0$ \cite{Bar1}. This result was generalized to arbitrary fields of characteristic $0$ in \cite{Bar2} and to any algebraically closed field of characteristic $p > 3$ 	by A.A. Baranov and H. Strade in \cite{BarStr} (with use of classification of finite dimensional simple Lie algebras  over positive characteristic). A.A. Baranov and A. G. Zhilinskii   classified simple locally finite Lie algebras of diagonal type \cite{BarZhi}. The nice exposition of these results and results expoiting the connections with Jordan structures in Lie algebras and celebrated E.I Zelmanov result on local finitness of Lie PI-algebras one can find in monography by A. Lopez \cite{Lopez}.

Quite recently I. Penkov and V. Serganova in \cite{PenSer} introduced new Lie algebras motivated by representation theory and Mackey dissertation \cite{Mackey} on linear pairings. Let $p: V \times W \to K$ be a nondegenerate pairing for some vector spaces, not necessarily of countable dimension or having same dimension. The endomorphisms of the pairing $p$ are denoted $\mathfrak{g}\mathfrak{l}^{M}(V, W)$ and called Mackey Lie algebra of $p$. If $dim(V) = \aleph_0$ and we fix a basis $\{e_i\}_{i \in \mathbb{N}}$, then $gl_M(V, V^{*})$ is identified with the Lie algebra of column-finite infinite matrices $\mathfrak{g}\mathfrak{l}_{cf}(\mathbb{N},K) $. In the series of papers \cite{Chir}, \cite{ChPen1}, \cite{ChPen2} there were  studied representation theory of such algebras and quite general categories connected with these representations.

  However there is no systematic study of  Lie algebras of infinite matrices. In this paper we shall consider the Lie algebra of column-finite infinite matrices indexed by positive integers $\mathbb{N}$,  describe the lattice of its ideals for arbitrary field $K$  and study its derivations over any commutative, unital ring $R$.

\bigskip

Let $R$ be a commutative, unital ring and let $M_{cf}(\mathbb{N},R)$ denote the set of all $\mathbb{N} \times \mathbb{N}$  column-finite matrices over $R$, i.e. infinite matrices which have only finite number of nonzero entries in each column.
It forms an associative $R-$algebra. In case $R=K$  -- a field, the column-finite matrices correspond to the linear endomorphisms of the vector space $K^{(\mathbb{N})}$ with respect to the canonical basis
 (elements of  $K^{(\mathbb{N})}$ are infinite columns $(a_1, \ldots , a_n, \ldots)^T$ with only finite number of nonzero entries). It is clear that  $M_{cf}(\mathbb{N},K)$ is not countably dimensional as a vector space and it is imposssible to find reasonable basis for it.

 $M_{cf}(\mathbb{N},R)$ with respect to Lie product $[A,B]=AB-BA$ forms a Lie algebra  $\mathfrak{g}\mathfrak{l}_{cf}(\mathbb{N}, R)$. By  $\mathfrak{g}\mathfrak{l}_{rcf}(\mathbb{N}, R)$ we denote its Lie subalgebra 
of all matrices which are simultaneously row-finite and column-finite. The additional condition of row-finitness means that also the transpose of the  matrix defines an endomorphism of  $K^{(\mathbb{N})}$,
 i.e. the adjoint endomorphism of the dual space  $K^{\mathbb{N}}$ preserves the subspace  $K^{(\mathbb{N})}$   ($K^{\mathbb{N}}$ consists of all infinite columns  $(a_1, \ldots , a_n, \ldots)^T$). 

\smallskip

By  $\mathfrak{g}\mathfrak{l}_{fr}(\mathbb{N}, R)$ we denote the Lie subalgebra of  $\mathfrak{g}\mathfrak{l}_{cf}(\mathbb{N}, R)$ consisting of all matrices which have nonzero entries in only finite number of rows. It is clear that 
$$\mathfrak{g}\mathfrak{l}_{fr}(\mathbb{N}, R) = \cup_{n>0}\mathfrak{g}\mathfrak{l}(n, \mathbb{N}, R),$$
where  $\mathfrak{g}\mathfrak{l}(n,\mathbb{N}, R)$ is a Lie subalgebra of matrices which have nonzero entries in the first $n$ rows.
For any matrix $A$ from  $\mathfrak{g}\mathfrak{l}(n,\mathbb{N}, R)$ we can define a trace $\textrm{Tr}(A)$ as a sum of first $n$  entries on the main diagonal. In fact, we can extend $\textrm{Tr}(A)$ onto  $\mathfrak{g}\mathfrak{l}_{fr}(\mathbb{N}, R)$. One can check that $\textrm{Tr}$ is a Lie homomorphism 
$$\textrm{Tr}:  \mathfrak{g}\mathfrak{l}_{fr}(\mathbb{N}, R) \to R$$ 
onto a commutative Lie algebra $R$. Its kernel  $\mathfrak{s}\mathfrak{l}_{fr}(\mathbb{N}, R)$ is an ideal in  $\mathfrak{g}\mathfrak{l}_{fr}(\mathbb{N}, R)$ and consists of all matrices with trace $0$. 

By  $\mathfrak{d}_{sc}(\mathbb{N}, R)$ we denote the Lie subalgebra of scalar matrices $\alpha \cdot E$, where $\alpha \in R$ and
$E$ is the infinite identity matrix. Of course, $\mathfrak{d}_{sc}(\mathbb{N}, R)$ is contained in the center of 
 $\mathfrak{g}\mathfrak{l}_{cf}(\mathbb{N}, R)$.

The direct limit $\mathfrak{g}\mathfrak{l}_{\infty}(R)$ of Lie  algebras $\mathfrak{g}\mathfrak{l}_n(R)$ of $n \times n$ matrices under natural embeddings $\mathfrak{g}\mathfrak{l}_n(R) \to \mathfrak{g}\mathfrak{l}_{n+1}(R)$, given by:
$$A \rightarrow \left(\begin{array}{c|c} A&0\cr \cline{1-2}
0&0\cr
\end{array}\right)$$
is the Lie algebra of countable dimension. It can be viewed as  the Lie algebra of infinite $\mathbb{N} \times \mathbb{N}$ matrices $A$, which have only finite number of nonzero entries. Similarly, we obtain a Lie subalgebra  $\mathfrak{s}\mathfrak{l}_{\infty}(R)$ of matrices $A$ having $\Tr (A)=0$. By results of Ado and Iwasawa any   finite-dimensional Lie algebra over a field $K$  can be embedded into $\mathfrak{g}\mathfrak{l}_n(K)$  for some natural $n$ and so into $\mathfrak{g}\mathfrak{l}_{\infty}(K)$.

It is known that any Lie algebra which has a countable dimensional faithfull representation can be embedded into 
$\mathfrak{g}\mathfrak{l}_{cf}(\mathbb{N}, K)$. This applies in particular to any countable dimensional simple Lie algebra (as it adjoint representation is faithfull). Classifying  Lie algebras  with a faithfull countable dimensional representation is clearly not a reasonable problem.

\noindent K.R. Goodearl, P. Menal and J. Moncasi proved in \cite{GMM93} that any countably dimensional associative $K-$ algebra can be embedded into the 
algebra of row-finite and column-finite infinite $\mathbb{N} \times \mathbb{N}$  matrices. It means that any countably dimensional Lie algebra which arise from such algebra by taking a Lie product can be embedded into
 $\mathfrak{g}\mathfrak{l}_{rcf}(\mathbb{N}, K)$. This result is stronger than that with use of adjoint representation.

The simplicity of $\mathfrak{s}\mathfrak{l}_{fr}(\mathbb{N}, K)$ for any field $K$ was proved in \cite{Hol16}, \cite{HZ16}. 
In fact, $\mathfrak{s}\mathfrak{l}_{fr}(\mathbb{N}, K)$ is a matrix representation of finitary simple special linear group $\mathfrak{s}\mathfrak{l}(V)$ for countably dimensional vector space $V$ over $K$. In \cite{Bar1} simplicity of $\mathfrak{s}\mathfrak{l}(V)$
was proved for algebraically closed field of characteristic $0$, generalized to any field of characteristic $0$ in \cite{Bar2}.
For algebraically closed fields of characteristic $p>3$ it was proved in \cite{BarStr}. These proofs rely on results in representation theory and classification of finite dimensional simple Lie algebras over a field of positive characteristic. 
Our proofs of simplicity of  $\mathfrak{s}\mathfrak{l}_{fr}(\mathbb{N}, K)$ are short and elementary, uses only matrix computations and are valid for any field $K$.

The results above give motivation for a systematic study of $\mathfrak{g}\mathfrak{l}_{cf} (\mathbb{N}, R)$.

We will use the direct sum decomposition
$\mathfrak{g} \oplus \mathfrak{h} $ of two Lie algebras $\mathfrak{g}$, $\mathfrak{h} $ in usual sense \cite{Hen12}. Our main result is the following

\begin{thm} For any field $K$ the following subalgebras $\{0\}$, 
$\mathfrak{d}_{sc}(\mathbb{N}, K)$, 
  $\mathfrak{s}\mathfrak{l}_{fr}(\mathbb{N}, K)$,
 $\mathfrak{g}\mathfrak{l}_{fr}(\mathbb{N}, K)$,
 $\mathfrak{d}_{sc}(\mathbb{N}, K) \oplus\mathfrak{s}\mathfrak{l}_{fr}(\mathbb{N}, K)$,
 $\mathfrak{J}_{k}$ for $k \in K \setminus\{0\}$,
 $\mathfrak{d}_{sc}(\mathbb{N}, K)  \oplus\mathfrak{g}\mathfrak{l}_{fr}(\mathbb{N}, K)$,
$\mathfrak{g}\mathfrak{l}_{cf}(\mathbb{N}, K)$
are the only  ideals of  $\mathfrak{g}\mathfrak{l}_{cf}(\mathbb{N}, K)$.
\end{thm}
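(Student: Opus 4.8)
The plan is to separate the two inclusions: that the seven subspaces are ideals (easy) and that there are no others (hard), the latter organized around a single extraction computation with matrix units. First I would verify that the seven are ideals. The only points needing a computation are $\mathfrak{gl}_{fr}(\mathbb{N},K)$ and $\mathfrak{sl}_{fr}(\mathbb{N},K)$. For the first: if $B$ has nonzero entries only in its first $n$ rows and $A$ is column-finite, then both $AB$ and $BA$ again have only finitely many nonzero rows, because the columns $1,\dots,n$ of $A$ have finite support; hence $[A,B]\in\mathfrak{gl}_{fr}(\mathbb{N},K)$. For the second, one checks $\Tr[A,B]=0$ for $A\in\mathfrak{gl}_{cf}(\mathbb{N},K)$ and $B\in\mathfrak{gl}_{fr}(\mathbb{N},K)$ by the relabelling $(k,m)\mapsto(m,k)$ in the two finite double sums computing $\Tr(AB)$ and $\Tr(BA)$. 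Since $\mathfrak{d}_{sc}(\mathbb{N},K)$ is central and sums of ideals are ideals, the remaining cases follow, and the two occurrences of ``$\oplus$'' are genuine direct sums because a nonzero scalar matrix is never finite-row.

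For the converse, let $I\neq\{0\}$ be an ideal and take $0\neq A\in I$. The heart of the matter is an extraction: if $A$ has a nonzero off-diagonal entry $a_{pq}$ with $p\neq q$, then fixing $i\notin\{p,q\}$ and choosing $l$ outside the finite set $\{l:a_{li}\neq 0\}$ (finite by column-finiteness) and distinct from $i,p,q$, a direct double-bracket computation gives $[[E_{ip},A],E_{ql}]=a_{pq}E_{il}\in I$, so some off-diagonal unit $E_{il}$ lies in $I$. From one such unit the relations $[E_{ji},E_{il}]=E_{jl}$, $[E_{il},E_{lm}]=E_{im}$ and $[E_{il},E_{li}]=E_{ii}-E_{ll}$ place every off-diagonal matrix unit and every diagonal difference into $I$. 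If instead every element of $I$ is diagonal, then either all are scalar, forcing $I=\mathfrak{d}_{sc}(\mathbb{N},K)$, or some diagonal $D\in I$ is non-scalar and $[E_{pq},D]=(d_q-d_p)E_{pq}\in I$ again yields an off-diagonal unit. Thus either $I=\mathfrak{d}_{sc}(\mathbb{N},K)$ or $I$ contains all finitary trace-zero matrices.

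Next I would promote ``finitary trace-zero'' to all of $\mathfrak{sl}_{fr}(\mathbb{N},K)$. Having $E_{12}\in I$, I bracket it with an arbitrary $X=\sum_{j\geq 3}c_jE_{2j}\in\mathfrak{gl}_{cf}(\mathbb{N},K)$, obtaining $[X,E_{12}]=-\sum_{j\geq 3}c_jE_{1j}\in I$: an arbitrary first row with zero diagonal entry, of possibly infinite support. Together with the finitary units already in $I$, and bracketing with $E_{i1}$ to move the row into any position, this produces every single-row trace-zero matrix, and summing finitely many of these realizes every element of $\mathfrak{sl}_{fr}(\mathbb{N},K)$. Hence every nonzero ideal is either $\mathfrak{d}_{sc}(\mathbb{N},K)$ or contains $\mathfrak{sl}_{fr}(\mathbb{N},K)$.

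It remains to classify the ideals $I\supseteq\mathfrak{sl}_{fr}(\mathbb{N},K)$. Here the decisive observation is that bracketing with any finitary matrix lands in $\mathfrak{gl}_{fr}(\mathbb{N},K)$, so to escape $\mathfrak{d}_{sc}(\mathbb{N},K)\oplus\mathfrak{gl}_{fr}(\mathbb{N},K)$ one must bracket with genuinely infinite matrices. The main step, and the step I expect to be the real obstacle, is to show that if $I$ contains some $A\notin\mathfrak{d}_{sc}(\mathbb{N},K)\oplus\mathfrak{gl}_{fr}(\mathbb{N},K)$ then $I=\mathfrak{gl}_{cf}(\mathbb{N},K)$; equivalently, that $\mathfrak{gl}_{cf}(\mathbb{N},K)/(\mathfrak{d}_{sc}(\mathbb{N},K)\oplus\mathfrak{gl}_{fr}(\mathbb{N},K))$ is simple. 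My plan for this is to manufacture from $A$ an element with infinitely many nonzero rows: after normalizing $A$ modulo $\mathfrak{sl}_{fr}(\mathbb{N},K)$ and reducing to a diagonal $D\in I$ whose diagonal is not eventually constant, the bracket $[D,\sum_i E_{i+1,i}]=\sum_i(d_{i+1}-d_i)E_{i+1,i}$ has infinite support, and then I would bootstrap this one infinite element by translating its support with finitary units and correcting finite discrepancies by elements of $\mathfrak{sl}_{fr}(\mathbb{N},K)$ to recover all of $\mathfrak{gl}_{cf}(\mathbb{N},K)$ modulo $\mathfrak{sl}_{fr}(\mathbb{N},K)$. Finally, when $I\subseteq\mathfrak{d}_{sc}(\mathbb{N},K)\oplus\mathfrak{gl}_{fr}(\mathbb{N},K)$, a direct inspection in the two-dimensional quotient $(\mathfrak{d}_{sc}(\mathbb{N},K)\oplus\mathfrak{gl}_{fr}(\mathbb{N},K))/\mathfrak{sl}_{fr}(\mathbb{N},K)$, recording whether $E\in I$ and whether $I$ meets $\mathfrak{gl}_{fr}(\mathbb{N},K)$ in a trace-nonzero matrix, singles out exactly $\mathfrak{sl}_{fr}$, $\mathfrak{gl}_{fr}$, $\mathfrak{d}_{sc}\oplus\mathfrak{sl}_{fr}$ and $\mathfrak{d}_{sc}\oplus\mathfrak{gl}_{fr}$, completing the list of seven.
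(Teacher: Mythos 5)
Your verification that the seven subspaces are ideals and your first dichotomy are essentially sound; indeed the identity $[[E_{ip},A],E_{ql}]=a_{pq}E_{il}$ is a cleaner extraction than the paper's Steps 1--2 of its Lemma 2.2, which need a separate discussion in characteristic $2$ (minor slip: a sum of single-row trace-zero matrices has zero diagonal, so to fill out $\mathfrak{sl}_{fr}$ you must also add the differences $E_{ii}-E_{jj}$, which you do have). The genuine gap is the step where $I$ contains some $A\notin\mathfrak{d}_{sc}\oplus\mathfrak{gl}_{fr}$. You propose to ``normalize $A$ modulo $\mathfrak{sl}_{fr}$ and reduce to a diagonal $D$ whose diagonal is not eventually constant,'' but no such reduction exists in general: $A$ may have zero diagonal and off-diagonal entries spread over infinitely many rows, and subtracting elements of $\mathfrak{sl}_{fr}$ alters only finitely many rows, so such an $A$ is never congruent to a diagonal matrix modulo $\mathfrak{sl}_{fr}$. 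This non-diagonal infinite case is exactly where the paper does its heaviest work (Lemma 2.3, Step 1): it selects a sparsely spaced subsequence of nonzero entries $a_{i_n,j_n}$, compresses $A$ by bracketing with infinite sums $\sum_n E_{i_n,i_n}$, $\sum_n E_{j_n,j_n}$ and a shift, and then solves an infinite triangular system to reach a diagonal matrix with infinitely many jumps. Your sketch does not engage with this. Similarly, the closing ``bootstrap'' cannot be done by translating supports with finitary units and correcting by $\mathfrak{sl}_{fr}$, since both operations change only finitely many rows of a matrix that must be matched in infinitely many rows; the paper instead normalizes the infinite element to $\sum_i E_{i,i+1}$ (its Steps 2--4) and proves $\ad\bigl(\sum_i E_{i,i+1}\bigr)$ is surjective by solving a recursion (Step 5).

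Your final ``direct inspection'' is also not a proof as stated, and in fact cannot be completed in the form you describe. The quotient $(\mathfrak{d}_{sc}\oplus\mathfrak{gl}_{fr})/\mathfrak{sl}_{fr}$ is two-dimensional, and since $[\mathfrak{gl}_{cf},\mathfrak{d}_{sc}]=0$ while every bracket $[\mathfrak{gl}_{cf},\mathfrak{gl}_{fr}]$ is finite-row of trace zero, the adjoint action on this quotient is trivial, so \emph{every} line in it pulls back to an ideal. Concretely, $\mathfrak{sl}_{fr}+K(E+E_{11})$ satisfies $[\mathfrak{gl}_{cf},E+E_{11}]=[\mathfrak{gl}_{cf},E_{11}]\subseteq\mathfrak{sl}_{fr}$ and is therefore an ideal lying strictly between $\mathfrak{sl}_{fr}$ and $\mathfrak{d}_{sc}\oplus\mathfrak{gl}_{fr}$ which is neither $\mathfrak{gl}_{fr}$ nor $\mathfrak{d}_{sc}\oplus\mathfrak{sl}_{fr}$; your two recorded invariants (whether $E\in I$, and whether $I$ meets $\mathfrak{gl}_{fr}$ in a trace-nonzero matrix) are both negative for it and so fail to separate it from $\mathfrak{sl}_{fr}$. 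Be aware that the paper's own proof of the theorem has the same blind spot --- it only inspects the codimension-one intervals and never rules out ideals of this type --- so completing your argument here requires either an additional argument excluding such ideals (which neither you nor the paper supplies) or an amendment of the stated list.
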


In \cite{PenSer}
the authors announced without the proof (at the end of chapter $6$) that 
$\mathfrak{g}\mathfrak{l}_{cf}(\mathbb{N}, \mathbb{C})$ has only five proper ideals. They ommited an uncountable family of ideals 
$\mathfrak{J}_{k}$ for $k \in \mathbb{C} \setminus\{0\}$. Our result fill this gap and give descriptions of all ideals for any field $K$.

The lattice of ideals of $\mathfrak{g}\mathfrak{l}_{cf}(\mathbb{N}, K)$  is shown in the  figure below (we abbreviate notation for more convenience, see the beginning of Section $2$ for definition of $\mathfrak{J}_{k}$.)

\begin{center}
\begin{tikzpicture}
 \draw (0,0.5)--(-2,1.5);
 \draw (-2,1.5)--(0,2.8);
 \draw (0,2.8)--(2,4);
 \draw (2,4)--(2,5.5);
 \draw (0,0.5)--(2,1.5);
 \draw (2,1.5)--(4,2.8);
 \draw (4,2.8)--(2,4);
 \draw (2,1.5)--(0,2.5);

 \draw (2,1.5)--(2,2.8);
 \draw (2,2.8)--(2,4);
	 \draw (2,1.5)--(1.5,2.6);
	 \draw (2,1.5)--(2.5,2.6);
	 \draw (1.5,2.6)--(2,4);
	 \draw (2.5,2.6)--(2,4);
 
 \fill (0,0.5) node [fill=white]{\{0\}};
 \fill (-2,1.5) node [fill=white]{$\mathfrak{d}_{sc}$};
 \fill (2,1.5) node [fill=white]{$\mathfrak{sl}_{fr}$};
 \fill (4,2.8) node [fill=white]{$\mathfrak{gl}_{fr}$};
 \fill (2,2.8) node [fill=white]{$\ldots$ $\mathfrak{J}_{k}$ $\ldots$};
 \fill (0,2.8) node [fill=white]{$\mathfrak{d}_{sc} \oplus \mathfrak{sl}_{fr} $};
 \fill (2,4) node [fill=white]{$\mathfrak{d}_{sc} \oplus \mathfrak{gl}_{fr} $};
 \fill (2,5.5) node [fill=white]{$\mathfrak{gl}_{cf}$};
\end{tikzpicture}
\end{center}

We note  that in finite-dimensional case the Lie algebra $\mathfrak{gl}(n, \mathbb{C})$ has only four ideals 
$\{0\}$, $\mathfrak{d}(n, \mathbb{C})$, $\mathfrak{sl}(n,\mathbb{C})$ and $\mathfrak{gl}(n, \mathbb{C})$ \cite{Hen12} and has the following lattice of ideals:

\begin{center}
\begin{tikzpicture}
 \draw (0,0)--(-2,2)--(0,4)--(2,2)--(0,0);
 \fill (0,0) node [fill=white]{$\{0\}$};
 \fill (-2,2) node [fill=white]{$\mathfrak{d}_{sc}(n, \mathbb{C})$};
 \fill (2,2) node [fill=white]{$\mathfrak{sl}(n, \mathbb{C})$};
 \fill (0,4) node [fill=white]{$\mathfrak{gl}(n, \mathbb{C})$}; 
\end{tikzpicture}
\end{center}

 As a consequence of Theorem 1.1.   we obtain

\begin{Cor}
The Lie algebra $\mathfrak{s}\mathfrak{l}_{fr}(\mathbb{N}, K)$ and the factor Lie algebra 
$\mathfrak{gl}_{cf}(\mathbb{N}, K)/(\mathfrak{d}_{sc}(\mathbb{N}, K) \oplus \mathfrak{gl}_{fr}(\mathbb{N}, K))$  are  simple.
\end{Cor}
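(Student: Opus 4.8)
The plan is to read both simplicity assertions straight off Theorem 1.1 by means of the correspondence (lattice isomorphism) theorem for Lie algebras: for an ideal $I$ of a Lie algebra $L$, the ideals of $L/I$ are exactly the images of the ideals of $L$ that contain $I$. Recall that a Lie algebra is \emph{simple} when it is non-abelian and has no ideals besides $0$ and itself; for a quotient $L/I$ this amounts to $I$ being a maximal ideal together with $[L,L]\not\subseteq I$. So the whole corollary reduces to locating a maximal ideal in the lattice of Theorem 1.1 and checking one non-abelian statement.

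First I would treat the factor algebra $Q:=\mathfrak{gl}_{cf}(\mathbb{N},K)/(\mathfrak{d}_{sc}(\mathbb{N},K)\oplus\mathfrak{gl}_{fr}(\mathbb{N},K))$. Scanning the seven ideals of Theorem 1.1, the only ones containing $\mathfrak{d}_{sc}(\mathbb{N},K)\oplus\mathfrak{gl}_{fr}(\mathbb{N},K)$ are that ideal itself and the whole algebra, so it is the unique maximal ideal (it is the unique coatom of the lattice). By the correspondence theorem $Q$ then has no ideals but $0$ and $Q$. It remains only to see that $Q$ is non-abelian, i.e.\ that $[\mathfrak{gl}_{cf},\mathfrak{gl}_{cf}]\not\subseteq \mathfrak{d}_{sc}\oplus\mathfrak{gl}_{fr}$, and for this one explicit bracket suffices: with the diagonal matrix $D=\mathrm{diag}(d_1,d_2,\dots)$ having $d_{i+1}\neq d_i$ for infinitely many $i$ and the lower shift $S=\sum_{i\ge 1}E_{i+1,i}$, both column-finite, one finds $[D,S]=\sum_{i\ge 1}(d_{i+1}-d_i)E_{i+1,i}$, which is off-diagonal with nonzero entries in infinitely many rows and hence lies in neither $\mathfrak{d}_{sc}$ nor $\mathfrak{gl}_{fr}$ nor their sum. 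Thus $Q$ is simple.

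For the first assertion I would identify $\mathfrak{sl}_{cf}(\mathbb{N},K)$ with $Q$. Using its definition from Section 2, the plan is to show that the quotient map $\mathfrak{gl}_{cf}(\mathbb{N},K)\to Q$ restricts to an isomorphism $\mathfrak{sl}_{cf}(\mathbb{N},K)\xrightarrow{\ \sim\ } Q$, i.e.\ that $\mathfrak{sl}_{cf}$ meets $\mathfrak{d}_{sc}\oplus\mathfrak{gl}_{fr}$ only in $0$ (injectivity) and that every column-finite matrix differs from an element of $\mathfrak{sl}_{cf}$ by something in $\mathfrak{d}_{sc}\oplus\mathfrak{gl}_{fr}$ (surjectivity). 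Transporting simplicity of $Q$ along this isomorphism then makes $\mathfrak{sl}_{cf}(\mathbb{N},K)$ simple, and exhibits it as the announced new simple Lie algebra; its uncountable dimension is inherited from $\mathfrak{gl}_{cf}$, the excised part $\mathfrak{d}_{sc}\oplus\mathfrak{gl}_{fr}$ being only countably dimensional.

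The genuine content is entirely in Theorem 1.1, so the corollary is essentially a harvest, and the two verifications that remain are of quite different weights. The non-abelian check is routine (the displayed commutator already settles it). The step I expect to be the real obstacle is the identification $\mathfrak{sl}_{cf}\cong Q$: one must extract from the Section 2 definition of $\mathfrak{sl}_{cf}$ that the restricted projection is simultaneously injective and surjective, the injectivity being the delicate half, since it requires that no nonzero element of $\mathfrak{sl}_{cf}$ is a scalar matrix plus a matrix with only finitely many nonzero rows. Once that identification is secured, the simplicity of both algebras follows at once from the maximality of $\mathfrak{d}_{sc}\oplus\mathfrak{gl}_{fr}$.
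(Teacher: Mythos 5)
Your treatment of the factor algebra $Q=\mathfrak{gl}_{cf}(\mathbb{N},K)/(\mathfrak{d}_{sc}(\mathbb{N},K)\oplus\mathfrak{gl}_{fr}(\mathbb{N},K))$ is correct and is exactly what the paper's one-line proof (``follows from homomorphism theorems'') intends: by Theorem 1.1 the only ideals of $\mathfrak{gl}_{cf}$ containing $\mathfrak{d}_{sc}\oplus\mathfrak{gl}_{fr}$ are that ideal and the whole algebra, so the correspondence theorem leaves $Q$ with no proper nonzero ideals, and your commutator $[D,S]=\sum_i(d_{i+1}-d_i)E_{i+1,i}$ correctly rules out the degenerate abelian case (a check the paper omits, and which also follows from Corollary 1.3).

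The first half of your plan, however, rests on a misidentification. The paper never defines $\mathfrak{sl}_{cf}(\mathbb{N},K)$, and the only reading consistent with the rest of the text (the introduction credits \cite{Hol16}, \cite{HZ16} with the simplicity of $\mathfrak{sl}_{fr}$ and then says Theorem 1.1 gives ``a different proof of simplicity of $\mathfrak{sl}_{cf}$'' than those same references; Lemma 2.2 is stated with $\mathfrak{sl}_{cf}$ but its proof concludes with $\mathfrak{sl}_{fr}$) is that $\mathfrak{sl}_{cf}$ means $\mathfrak{sl}_{fr}(\mathbb{N},K)$, the trace-zero matrices with finitely many nonzero rows. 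That algebra satisfies $\mathfrak{sl}_{fr}\subseteq\mathfrak{gl}_{fr}\subseteq\mathfrak{d}_{sc}\oplus\mathfrak{gl}_{fr}$, so the quotient map you propose to restrict sends it to $0$, not isomorphically onto $Q$; there is no natural subalgebra of $\mathfrak{gl}_{cf}$ complementary to $\mathfrak{d}_{sc}\oplus\mathfrak{gl}_{fr}$, so no version of your ``transport of simplicity'' can be salvaged. Moreover, even aimed at the right target, Theorem 1.1 alone does not finish the job: it classifies ideals of $\mathfrak{gl}_{cf}$, whereas simplicity of $\mathfrak{sl}_{fr}$ concerns ideals of $\mathfrak{sl}_{fr}$ as a Lie algebra in its own right, and an ideal of an ideal need not be an ideal of the ambient algebra. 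One must check separately (as in \cite{Hol16}, \cite{HZ16}, or by rerunning Steps 1--3 of Lemma 2.2 using only brackets with elements of $\mathfrak{sl}_{fr}$ itself) that every nonzero ideal of $\mathfrak{sl}_{fr}$ contains some $E_{ij}$ with $i\neq j$ and hence all of $\mathfrak{sl}_{fr}$. This extra step is missing from your proposal (and, to be fair, is also glossed over by the paper).
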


Our proof of Theorem 1.1. gives a different proof of simplicity of $\mathfrak{s}\mathfrak{l}_{cf}(\mathbb{N}, K)$  than that given in \cite{Hol16}, \cite{HZ16}.
We note that the simplicity of   $\mathfrak{gl}_{cf}(\mathbb{N}, \mathbb{C})/(\mathfrak{d}_{sc}(\mathbb{N}, \mathbb{C}) \oplus \mathfrak{gl}_{fr}(\mathbb{N}, \mathbb{C}))$
was proven in in \cite{PenSer} (Theorem 6.3.b)). We generalize this result to any field $K$. The proof in \cite{PenSer} uses among others generalized eigenspaces, infinity of $\mathbb{C}$ and so it can not be generalized to any field $K$.

Direct consequence of the Theorem 1.1.  is the following
\begin{Cor}
The derived algebra of $\mathfrak{g}\mathfrak{l}_{cf}(\mathbb{N}, K)$  coincides with 
$\mathfrak{g}\mathfrak{l}_{cf}(\mathbb{N}, K)$, i.e. it is perfect. 
The ideal $\mathfrak{d}_{sc}(\mathbb{N}, K)$ is the center of $\mathfrak{g}\mathfrak{l}_{cf}(\mathbb{N}, K)$.
\end{Cor}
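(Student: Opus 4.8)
The plan is to read off both assertions from the classification in Theorem 1.1, using the elementary fact that the derived algebra and the center of any Lie algebra are themselves ideals. The key structural observation is that the lattice pictured above has a \emph{unique} maximal proper ideal, namely $\mathfrak{d}_{sc}\oplus\mathfrak{gl}_{fr}$: every one of the other five proper ideals is contained in it, since $\mathfrak{sl}_{fr}\subset\mathfrak{gl}_{fr}$ and $\mathfrak{d}_{sc}\subset\mathfrak{d}_{sc}\oplus\mathfrak{gl}_{fr}$. Concretely, $\mathfrak{d}_{sc}\oplus\mathfrak{gl}_{fr}$ consists of the matrices of the form $\alpha E+F$ with $\alpha\in K$ and $F$ having nonzero entries in only finitely many rows. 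Thus, to show that an ideal is all of $\mathfrak{gl}_{cf}(\mathbb{N},K)$, it will suffice to exhibit one element of it that is not of this form.

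For perfectness, the derived algebra $[\mathfrak{gl}_{cf}(\mathbb{N},K),\mathfrak{gl}_{cf}(\mathbb{N},K)]$ is an ideal, hence by Theorem 1.1 it is one of the seven listed subalgebras, and I only need to rule out its being contained in $\mathfrak{d}_{sc}\oplus\mathfrak{gl}_{fr}$. First I would produce a single commutator having infinitely many nonzero rows: let $S$ be the lower shift matrix with $S_{i+1,i}=1$ and all other entries $0$, and let $D=\operatorname{diag}(1,2,3,\dots)$; both lie in $\mathfrak{gl}_{cf}(\mathbb{N},K)$ since each of their columns is finite. A direct computation gives $[D,S]_{i+1,i}=d_{i+1}-d_i=1$ and all other entries $0$, so $[D,S]=S$. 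Since $S$ has a nonzero entry in every row $i\geqs 2$ and is not a scalar matrix, $S\notin\mathfrak{d}_{sc}\oplus\mathfrak{gl}_{fr}$; hence the derived algebra is not contained in the maximal proper ideal and must equal $\mathfrak{gl}_{cf}(\mathbb{N},K)$.

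For the center, note that $Z=Z(\mathfrak{gl}_{cf}(\mathbb{N},K))$ is an abelian ideal, so again it is one of the seven subalgebras, and it contains $\mathfrak{d}_{sc}(\mathbb{N},K)$ as already observed above. Among the seven ideals, the ones containing $\mathfrak{d}_{sc}$ are exactly $\mathfrak{d}_{sc}$, $\mathfrak{d}_{sc}\oplus\mathfrak{sl}_{fr}$, $\mathfrak{d}_{sc}\oplus\mathfrak{gl}_{fr}$ and $\mathfrak{gl}_{cf}$; the last three are non-abelian, since each contains $E_{12}$ and $E_{21}$ with $[E_{12},E_{21}]=E_{11}-E_{22}\neq 0$. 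As $Z$ must be abelian, the only possibility left is $Z=\mathfrak{d}_{sc}(\mathbb{N},K)$. (Alternatively one could verify directly that $[A,E_{ij}]=0$ for all $i,j$ forces $A$ to be scalar, but the argument via Theorem 1.1 is shorter.)

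Since Theorem 1.1 carries the real weight, neither part presents a serious obstacle; the only point requiring a little care is the perfectness step, where one must exhibit a commutator that genuinely escapes the maximal proper ideal, i.e. one with nonzero entries in infinitely many rows and not equal to a scalar. The identity $[D,S]=S$ above supplies this cleanly, the crucial feature being that the unbounded diagonal $D=\operatorname{diag}(1,2,3,\dots)$ is column-finite, hence a legitimate element of $\mathfrak{gl}_{cf}(\mathbb{N},K)$, even though it itself lies outside $\mathfrak{d}_{sc}\oplus\mathfrak{gl}_{fr}$.
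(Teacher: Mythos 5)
Your proof is correct, and its logical skeleton coincides with the paper's: both assertions are read off from Theorem 1.1 by observing that the derived algebra and the center are ideals, hence each must be one of the seven listed subalgebras, so the only work is to eliminate the wrong candidates. The one genuine difference lies in how the derived algebra is shown to escape the unique maximal proper ideal $\mathfrak{d}_{sc}\oplus\mathfrak{gl}_{fr}$. The paper does this by citing \cite{HKZ17} for the fact that $\sum_{i=1}^{\infty}E_{i,i+2}$ belongs to the derived subalgebra of the strictly upper triangular matrices, hence of $\mathfrak{gl}_{cf}(\mathbb{N},K)$, and that element visibly has infinitely many nonzero rows. You instead manufacture the witness from scratch via the identity $[D,S]=S$ with $D=\operatorname{diag}(1,2,3,\dots)$ and $S$ the lower shift; the computation $d_{i+1}-d_i=1_K$ is valid in any characteristic, both matrices are column-finite, and $S$ plainly lies outside $\mathfrak{d}_{sc}\oplus\mathfrak{gl}_{fr}$. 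This makes the corollary self-contained where the paper leans on an external result, at no extra cost. For the center, your argument (the three larger ideals containing $\mathfrak{d}_{sc}$ all contain $E_{12}$ and $E_{21}$ and so are non-abelian because $[E_{12},E_{21}]=E_{11}-E_{22}\neq 0$) simply spells out what the paper dismisses as ``the obvious consequence of Theorem 1.1''; both are sound.
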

In view of above results we can make the following conjecture
\begin{conj}
Lie algebra of endomorphisms of any infinite dimensional vector space is perfect. The center of this
algebra coincide with the set of scalar operators.
\end{conj}

The second main result describes derivations.
\begin{thm}
Let $R$ be any commutative unital ring. Every derivation of $\mathfrak{g}\mathfrak{l}_{cf}(\mathbb{N}, R)$  is a sum of inner and central derivations.
\end{thm}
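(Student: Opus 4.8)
The plan is to show that an arbitrary derivation $D$ of $\mathfrak{g}\mathfrak{l}_{cf}(\mathbb{N},R)$ can be brought, after subtracting a single inner derivation, into a form taking values in the center $\mathfrak{d}_{sc}(\mathbb{N},R)$. I would start from the matrix units $E_{ij}$ and exploit the weight relations obtained by applying $D$ to $[E_{mm},E_{ij}]=(\delta_{mi}-\delta_{mj})E_{ij}$. Bracketing with the diagonal idempotents $E_{mm}$ decomposes $D(E_{ij})$ into weight components and forces it to have the shape $\sum_k x_{ki}E_{kj}-\sum_l x_{jl}E_{il}$ together with a possible scalar term, which is exactly the shape of $[X,E_{ij}]$ for a matrix $X=(x_{pq})$. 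The entries $x_{ki}$ are read off from the column-$j$ part of $D(E_{ij})$ and the $x_{jl}$ from its row-$i$ part; applying $D$ to $E_{ij}=[E_{ik},E_{kj}]$ checks that these readings are mutually consistent, so a single matrix $X$ is well defined.

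The decisive point of the construction is that $X$ lies in $\mathfrak{g}\mathfrak{l}_{cf}(\mathbb{N},R)$: since $D(E_{ij})$ is column-finite, its column-$j$ component $\sum_k x_{ki}E_{kj}$ has only finitely many nonzero entries, which says precisely that the $i$-th column of $X$ is finite. Hence every column of $X$ is finite and $\ad X$ is a genuine inner derivation. Replacing $D$ by $D'=D-\ad X$, I obtain a derivation with $D'(E_{ij})\in\mathfrak{d}_{sc}(\mathbb{N},R)$ for all $i,j$. Feeding the relations $E_{ij}=[E_{ii},E_{ij}]$ for $i\neq j$ and $E_{ii}-E_{jj}=[E_{ij},E_{ji}]$ back through $D'$, and using that central values bracket trivially, gives $D'(E_{ij})=0$ for $i\neq j$ and $D'(E_{ii})=cE$ for a single scalar $c\in R$ independent of $i$; in particular $D'$ annihilates $\mathfrak{s}\mathfrak{l}_{\infty}(R)$.

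It remains to transport this control from the finite matrices to all of $\mathfrak{g}\mathfrak{l}_{cf}(\mathbb{N},R)$, and this is where I expect the real work to lie. Because the algebra is uncountably dimensional and is not generated by the $E_{ij}$, I cannot finish by a generation argument; instead I would probe an arbitrary column-finite $A$ entry by entry, powered entirely by column-finiteness. The mechanism is that for a fresh index $r$ and $p\neq q$ the nested bracket $[E_{rp},[A,E_{qr}]]$ equals a \emph{finite} matrix of trace zero (the potentially infinite rows of $A$ are killed, while the surviving data are finite columns of $A$), hence lies in $\mathfrak{s}\mathfrak{l}_{\infty}(R)$ where $D'$ already vanishes. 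Applying $D'$ to this bracket and using $D'(E_{rp})=D'(E_{qr})=0$ collapses everything to $[E_{rp},[D'(A),E_{qr}]]=0$; reading the coefficient of $E_{rr}$ forces the $(p,q)$-entry of $D'(A)$ to vanish, so $D'(A)$ is diagonal. The layered step is then to pin down the diagonal: I would first determine $D'$ on the ideal $\mathfrak{g}\mathfrak{l}_{fr}(\mathbb{N},R)$, using that for a matrix supported in the first $n$ rows the bracket with $E_{ij}$, $j>n$, again lands in $\mathfrak{g}\mathfrak{l}_{\infty}(R)$, and only afterwards handle a general $A$ via $(b_{pp}-b_{qq})E_{pq}=[D'(A),E_{pq}]=D'([A,E_{pq}])$, whose right-hand side is now accessible because $[A,E_{pq}]$ decomposes into a finite part and a one-row part in $\mathfrak{g}\mathfrak{l}_{fr}(\mathbb{N},R)$.

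Once $D'$ is shown to take values in the center, it is automatically a central derivation: for any $A,B$ one has $D'([A,B])=[D'(A),B]+[A,D'(B)]=0$ since $D'(A)$ and $D'(B)$ are central, so $D'$ annihilates commutators. Therefore $D=\ad X+D'$ displays $D$ as a sum of an inner and a central derivation, which is the assertion. The main obstacle is clearly the penultimate step: the finite-matrix computations only fix $D'$ on a countable-dimensional part of the algebra, and the passage to genuinely infinite matrices must be carried by column-finiteness, which is exactly what keeps the auxiliary brackets inside the manageable subalgebras $\mathfrak{g}\mathfrak{l}_{\infty}(R)$ and $\mathfrak{g}\mathfrak{l}_{fr}(\mathbb{N},R)$ where $D'$ is already understood.
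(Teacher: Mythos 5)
Your argument is correct in outline, but it takes a genuinely different route from the paper's. The paper decomposes $\mathfrak{gl}_{cf}(\mathbb{N},R)$ as $\mathfrak{d}\oplus\mathcal{ND}(\mathbb{N},R)$ and corrects $\varphi$ in two stages: first by $\ad A$ for a zero-diagonal $A$ so that $\mathfrak{d}$ is preserved (Lemma 3.1), then by $\ad D$ for a diagonal $D$ built from the scalars $x_i$ with $\psi(E_{i,i+1})=x_iE_{i,i+1}$ so that all of $\mathcal{ND}(\mathbb{N},R)$ is annihilated (Lemma 3.2); the technical heart there is Steps 5--7, which track the support of $\psi(B)$ for one-row matrices $B\in\beta^{(i,j)}$ and then compute the entries of $\psi(Y)$ for an arbitrary zero-diagonal column-finite $Y$ directly. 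You instead build the correcting matrix $X$ in one shot from the values $D(E_{ij})$ --- your observation that column-finiteness of $D(E_{ij})$ forces $X$ to be column-finite is exactly the right point --- and then pass from the finitary matrices to all of $\mathfrak{gl}_{cf}$ by a different mechanism: the nested bracket $[E_{rp},[A,E_{qr}]]=a_{pq}E_{rr}-\sum_k a_{kq}E_{kp}+a_{rr}E_{qp}$ is indeed a finite trace-zero matrix for $r\notin\{p,q\}$, so it lies where $D'$ already vanishes, and reading off the $(r,r)$-coefficient of $[E_{rp},[D'(A),E_{qr}]]=0$ gives $D'(A)_{pq}=0$; the subsequent layering through the ideal $\mathfrak{gl}_{fr}(\mathbb{N},R)$ to pin down the diagonal also checks out. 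Your route buys a cleaner conceptual picture, reusing the subalgebra structure of Section 2 and avoiding the delicate support-tracking of the paper's Step 5, at the cost of the one-shot construction of $X$, whose well-definedness requires the consistency checks you only allude to. One point to spell out when writing this up: over a ring in which $2$ is not invertible, bracketing with the idempotents $E_{mm}$ alone only yields $2\,D(E_{ij})_{ji}=0$, so to kill the $(j,i)$-entry of $D(E_{ij})$ (and the stray diagonal entries $D(E_{ij})_{kk}$, $k\notin\{i,j\}$) you must invoke the three-index relation $E_{ij}=[E_{ik},E_{kj}]$, which you do mention but only in the context of consistency. With that caveat, the proposal is a valid alternative proof.
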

\begin{Cor}
For any field $K$ every derivation of $\mathfrak{gl}_{cf}(\mathbb{N}, K)$ is inner and
\begin{center}
$Der(\mathfrak{gl}_{cf}(\mathbb{N}, K) \cong \mathfrak{gl}_{cf}(\mathbb{N}, K)/ \mathfrak{d}_{sc}(\mathbb{N}, K)$
\end{center}
\end{Cor}

We refer to section 3 for definitions. The derivations of $\mathfrak{gl}(n, R)$ were described in \cite{WY06}.
In \cite{Noe05} it was shown, among others, that the Lie algebra of derivations of $\mathfrak{s}\mathfrak{l}_{\infty}(K)$ ($K$ a field of characteristic $0$) is isomorphic to
factor algebra $\mathfrak{g}\mathfrak{l}_{rcf}(\mathbb{N}, K)/ \mathfrak{d}_{sc}(\mathbb{N}, K)$. 
The description given in \cite{Noe05} complements decription from \cite{Str99} of derivations of locally finite split simple Lie algebras over a field of characteristic zero. It provides additional information that leads for the split case to the aforementioned description by infinite matrices.

In \cite{HKZ17} the derivations of the Lie algebra of strictly uppertriangular matrices 
 $\mathfrak{n}(\mathbb{N}, R)$ for any commutative unital ring $R$ were described (it seems to be  a first example of finding derivations for uncountably dimesional Lie algebra of infinite matrices, our Theorem $1.5$ gives a second example).

Our results are proved for $\mathfrak{g}\mathfrak{l}_{cf}(\mathbb{N}, R)$, but  the Proposition 3.4. says that $\mathfrak{g}\mathfrak{l}_{cf}(\mathbb{N}, R)$  and $\mathfrak{g}\mathfrak{l}_{cf}(\mathbb{Z}, R)$ - the Lie algebra of 
$\mathbb{Z} \times \mathbb{Z}$ column-finite infinite matrices are isomorphic. 
So $\mathfrak{gl}_{cf}(\mathbb{Z}, R)$ has the same lattice of ideals (if $R=K$) and similar description of derivations.
The proofs for $\mathbb{N} \times  \mathbb{N}$ matrices  are technically simpler.

We note that $\mathfrak{g}\mathfrak{l}_{cf}(\mathbb{Z}, \mathbb{C})$ contains a Lie algebra of generalized Jacobian matrices $\mathfrak{gl}_J$.
Recently, A. Fialowski and K. Iohara in \cite{FI17}, \cite{FI18} studied homology of $\mathfrak{gl}_J$ and its subalgebras. They remarked that there is a little known about algebraic properties of $\mathfrak{gl}_J$. The authors plan to fill this gap in forthcoming paper. The following problem seems to be interesting:
\begin{quote}
Describe the lattice of ideals of Lie algebra of generalized Jacobian matrices.
\end{quote}

In section $2$ we give the proofs of Theorem 1.1. and corollaries, in section $3$ we prove Theorem 1.5. and Proposition 3.4.


\section{Ideals of the Lie algebra of column-finite infinite matrices}

In this section we describe ideals of  the Lie algebra $\mathfrak{gl}_{cf}(\mathbb{N},K)$ over a field $K$.
We recall that a subspace $\mathfrak{I}$ is an ideal of Lie algebra $\mathfrak{L}$, if it  satisfies the condition $[\mathfrak{L},\mathfrak{I}]\subseteq \mathfrak{I}$, we denote it $\mathfrak{I}\triangleleft \mathfrak{L}$. This condition is equivalent to $[\mathfrak{I},\mathfrak{L}]\subseteq \mathfrak{I}$ (it follows from anticommutativity and bilinearity of Lie product).

By $\langle \mathfrak{g}_{1}, \mathfrak{g}_{2} \rangle$ we denote the interval of ideals of $\mathfrak{gl}_{cf}(\mathbb{N},\mathbb{K})$ between 
 $\mathfrak{g}_{1}$ and  $\mathfrak{g}_{2}$, i.e.the set of  all $\mathfrak{I}\triangleleft \mathfrak{gl}_{cf}(\mathbb{N},\mathbb{K})$ such that
 $\mathfrak{g}_{1} \subseteq \mathfrak{I} \subseteq \mathfrak{g}_{2}$.  The ideal $\mathfrak{I}$ is called proper if $\mathfrak{I} \not=\mathfrak{g}_{1}$ and  $\mathfrak{I} \not=\mathfrak{g}_{2}$.

Since in this section $K$ is the fixed field, we will use abbreviation  $\mathfrak{gl}_{cf}$ for  $\mathfrak{gl}_{cf}(\mathbb{N},K)$. Similarly we use abbreviations 
$\mathfrak{d}_{sc}$, 
  $\mathfrak{s}\mathfrak{l}_{fr}$,
 $\mathfrak{g}\mathfrak{l}_{fr}$,
 $\mathfrak{d}_{sc} \oplus\mathfrak{s}\mathfrak{l}_{fr}$,
 $\mathfrak{d}_{sc} \oplus\mathfrak{g}\mathfrak{l}_{fr}$ for its corresponding  subalgebras. 
 By $\{0\}$ we denote the  trivial algebra. 

We denote by  $E$ -- the infinite identity matrix and by $E_{ij}$ - the matrix  which has $1$ in $(i,j)$-entry and zeros elsewhere.
For a matrix $A=(a_{ij}) \in \mathfrak{gl}_{cf}$ we will write formally $A= \sum _{ij}a_{ij} E_{ij}$.

Let $\mathfrak{J}_{k}=\{  m(E+k E_{11}) | m \in K \}  \oplus \mathfrak{sl}_{fr}$.


\begin{lem}\label{idCF}
Lie subalgebras 
$\{0\}$, 
$\mathfrak{d}_{sc}$, 
  $\mathfrak{s}\mathfrak{l}_{fr}$,
 $\mathfrak{gl}_{fr}$,
 $\mathfrak{d}_{sc} \oplus\mathfrak{s}\mathfrak{l}_{fr}$,
 $\mathfrak{J}_{k}$ for $k \in K$,
 $\mathfrak{d}_{sc} \oplus\mathfrak{g}\mathfrak{l}_{fr}$ and  $\mathfrak{gl}_{cf}$
 are ideals in the Lie algebra $\mathfrak{gl}_{cf}$.
\end{lem}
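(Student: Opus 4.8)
The plan is to verify the defining condition $[\mathfrak{gl}_{cf}, \mathfrak{I}] \subseteq \mathfrak{I}$ for each of the seven candidates, clearing the easy cases first and then isolating the two that carry the real content. The extreme cases $\{0\}$ and $\mathfrak{gl}_{cf}$ are immediate, and $\mathfrak{d}_{sc}$ is an ideal because $[A, \alpha E] = \alpha(AE - EA) = 0$ for every $A \in \mathfrak{gl}_{cf}$, so $\mathfrak{d}_{sc}$ is in fact central. Since a sum of two ideals is again an ideal, the two direct sums $\mathfrak{d}_{sc} \oplus \mathfrak{sl}_{fr}$ and $\mathfrak{d}_{sc} \oplus \mathfrak{gl}_{fr}$ will follow at once once $\mathfrak{sl}_{fr}$ and $\mathfrak{gl}_{fr}$ are shown to be ideals; the sums are genuinely direct because a nonzero scalar matrix $\alpha E$ occupies the entire diagonal and hence lies in no finite set of rows, so $\mathfrak{d}_{sc} \cap \mathfrak{gl}_{fr} = \{0\}$. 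Thus everything reduces to the two claims $\mathfrak{gl}_{fr} \triangleleft \mathfrak{gl}_{cf}$ and $\mathfrak{sl}_{fr} \triangleleft \mathfrak{gl}_{cf}$.

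For $\mathfrak{gl}_{fr}$ I would argue by tracking the support of rows. Fix $A \in \mathfrak{gl}_{cf}$ and $B \in \mathfrak{gl}_{fr}$, and let $S$ be the finite set of indices of the nonzero rows of $B$. In $BA$ the $i$-th row vanishes whenever $i \notin S$, since $(BA)_{ij} = \sum_k B_{ik}A_{kj}$ and $B_{ik} = 0$ for $i \notin S$; hence $BA$ is again supported in the rows $S$. For $AB$ the point is that $(AB)_{ij} = \sum_{k \in S} A_{ik}B_{kj}$, so the only columns of $A$ that contribute are those indexed by $k \in S$; because $A$ is column-finite, each such column has finite support, and the union $T$ of these finitely many supports is a finite set of rows outside of which $AB$ vanishes. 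Therefore $[A,B] = AB - BA$ has nonzero entries in at most the rows $S \cup T$, a finite set, so $[A,B] \in \mathfrak{gl}_{fr}$.

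To handle $\mathfrak{sl}_{fr}$ I would combine the previous paragraph with the fact that $\Tr$ annihilates brackets. Given $A \in \mathfrak{gl}_{cf}$ and $B \in \mathfrak{sl}_{fr}$, the element $[A,B]$ already lies in $\mathfrak{gl}_{fr}$, so $\Tr([A,B])$ is defined, and it remains to check $\Tr(AB) = \Tr(BA)$. Writing both traces as the finite diagonal sums $\sum_i (AB)_{ii} = \sum_{i,k} A_{ik} B_{ki}$ and $\sum_i (BA)_{ii} = \sum_{i,k} B_{ik} A_{ki}$, the two double sums coincide after the relabeling $(i,k) \mapsto (k,i)$. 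The step that needs justification, and which I expect to be the only genuinely delicate point, is that this rearrangement is legitimate: I would observe that a term $A_{ik} B_{ki}$ can be nonzero only when $B_{ki} \neq 0$, which forces $k \in S$ and, by column-finiteness of $B$, only finitely many $i$ for each such $k$; hence only finitely many pairs $(i,k)$ contribute and each sum is an honest finite sum. This yields $\Tr([A,B]) = 0$, so $[A,B] \in \mathfrak{sl}_{fr}$, which completes the reduction and hence the lemma.
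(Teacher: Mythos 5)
Your proof follows the same route as the paper's: triviality for $\{0\}$ and $\mathfrak{gl}_{cf}$, centrality of $\mathfrak{d}_{sc}$, a row-support argument for $\mathfrak{gl}_{fr}$, a trace computation for $\mathfrak{sl}_{fr}$, and the fact that a sum of ideals is an ideal for the two direct summands; your bookkeeping of the row supports of $AB$ and $BA$ is precisely the paper's argument with the roles of $A$ and $B$ interchanged. The one point to repair is in the step you yourself single out as delicate: the justification ``by column-finiteness of $B$, only finitely many $i$ for each such $k$'' is not correct, because a matrix of $\mathfrak{gl}_{fr}$ may have a row with infinitely many nonzero entries (for instance $B=\sum_{j}E_{1j}$ lies in $\mathfrak{gl}_{fr}$ and is column-finite), so for a fixed $k\in S$ the set of $i$ with $B_{ki}\neq 0$ can be infinite. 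The finiteness you need comes from the other factor: for fixed $k$ the $k$-th column of $A$ has finite support, so $A_{ik}\neq 0$ for only finitely many $i$; combined with the restriction $k\in S$ this does yield finitely many contributing pairs in each double sum, and the relabelling $(i,k)\mapsto(k,i)$ then goes through. With that correction your treatment of $\Tr([A,B])=0$ is in fact more explicit than the paper's, which truncates $AB$ and $BA$ to finite upper-left corners and simply asserts that the trace of the difference vanishes.
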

\begin{proof}
It is clear  that $\{0\}\triangleleft\mathfrak{gl}_{cf}$ and $\mathfrak{gl}_{cf}\triangleleft\mathfrak{gl}_{cf}$. 
Any  $A\in\mathfrak{d}_{sc}$ commutes with every $B\in\mathfrak{gl}_{cf}$, which means that  $[A,B]=0$ and  so $\mathfrak{d}_{sc}\triangleleft\mathfrak{gl}_{cf}$.

Let $A\in\mathfrak{gl}_{fr}$ and $n$ be a number of the last nonzero row in the matrix $A$. For all $B\in\mathfrak{gl}_{cf}$ the  matrix $AB$ has at most $n$ nonzero rows. 
Since  $B$ is column-finite, it  has only finite numbers of nonzero entries in the first $n$ columns so $BA\in\mathfrak{gl}_{fr}$. Finally
$[A,B]=AB-BA\in\mathfrak{gl}_{fr}$.

Let $A\in\mathfrak{sl}_{fr}$ and $B\in\mathfrak{gl}_{cf}$. We know that $AB\in\mathfrak{gl}_{fr}$ and $BA\in\mathfrak{gl}_{fr}$. Let $m$ be the number of the last
nonzero row in the matrix $AB$, $k$ be the number of the last nonzero row in the matrix $BA$ and $n=\max\{m,k\}$. Denote by $(AB)_n$ and $(BA)_n$ square  $n\times n$ matrices 
from upper left corner of  $AB$ and $BA$ respectively. We have $\textrm{Tr }(AB-BA)=\textrm{Tr }((AB)_n-(BA)_n)=0$.
From the bilinearity of Lie bracket and previous considerations it follows that $\mathfrak{J}_{k}$ is also an ideal of $\mathfrak{gl}_{cf}$
\end{proof}

\begin{lem}\label{sl}
Let  $\mathfrak{I}\triangleleft \mathfrak{gl}_{cf}$. Then either $\mathfrak{I}\subset \mathfrak{d}_{sc}$ or $\mathfrak{sl}_{fr} \subset \mathfrak{I}$.
\end{lem}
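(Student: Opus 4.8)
The plan is to prove the dichotomy directly: assuming $\mathfrak{I}\not\subseteq\mathfrak{d}_{sc}$, I will show $\mathfrak{sl}_{cf}\subseteq\mathfrak{I}$. The first task is to extract a single off-diagonal matrix unit from $\mathfrak{I}$. Pick $A=(a_{ij})\in\mathfrak{I}$ that is not a scalar matrix. If $A$ has an off-diagonal entry $a_{pq}\neq 0$ with $p\neq q$, then iterated brackets of $A$ with the units $E_{ij}$ — the manipulation used to prove simplicity of $\mathfrak{sl}_n$, resting on $E_{ip}AE_{qj}=a_{pq}E_{ij}$ — produce a nonzero scalar multiple of some $E_{ij}$ with $i\neq j$, so $E_{ij}\in\mathfrak{I}$. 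If instead $A$ is diagonal but not scalar, there are indices with $a_{pp}\neq a_{qq}$, and $[A,E_{pq}]=(a_{pp}-a_{qq})E_{pq}\in\mathfrak{I}$ again yields an off-diagonal unit. In either case $\mathfrak{I}$ contains at least one $E_{i_0 j_0}$ with $i_0\neq j_0$.

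Next I propagate this across the whole finitely supported traceless part. From $E_{i_0 j_0}$ the relations $[E_{a\,i_0},E_{i_0 j_0}]=E_{a j_0}$ $(a\neq j_0)$ and $[E_{i_0 j_0},E_{j_0 b}]=E_{i_0 b}$ $(b\neq i_0)$, iterated, place every off-diagonal $E_{ij}$ $(i\neq j)$ in $\mathfrak{I}$, and then $[E_{ij},E_{ji}]=E_{ii}-E_{jj}\in\mathfrak{I}$ supplies every traceless diagonal difference; hence all \emph{finite} linear combinations of these lie in $\mathfrak{I}$. The essential new point, distinguishing the column-finite setting from the purely finitary one, is that $\mathfrak{I}$ is then forced to contain \emph{infinitely supported} elements as well. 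This comes from the ideal property applied to a column-finite $B$ of infinite support: taking $B=\sum_{k}c_k E_{j_0 k}\in\mathfrak{gl}_{cf}$ (column-finite, since column $k$ has the single entry $c_k$ in row $j_0$) one computes $[E_{i_0 j_0},B]=\sum_{k}c_k E_{i_0 k}-c_{i_0}E_{j_0 j_0}\in\mathfrak{I}$, an element spread over infinitely many positions of row $i_0$. Thus, modulo the finitely supported traceless corrections already available, every column-finite matrix concentrated in a single row, with the correct diagonal contribution, is realized inside $\mathfrak{I}$.

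Assembling these, I will write an arbitrary element of $\mathfrak{sl}_{cf}$ — a traceless matrix supported in finitely many rows, each row an arbitrary column-finite vector — as a finite sum of commutators $[E_{i_0 j_0},B]$ with $B\in\mathfrak{gl}_{cf}$ together with finitely supported traceless units, all of which lie in $\mathfrak{I}$; column-finiteness guarantees that every matrix product occurring is defined and remains in $\mathfrak{gl}_{cf}$. The main obstacle is precisely this final bookkeeping: a single commutator $[E_{i_0 j_0},B]$ delivers the wanted infinitely supported row data but also a spurious diagonal correction (the $-c_{i_0}E_{j_0 j_0}$ above), so the work lies in choosing the matrices $B$ one row at a time and combining them with the $E_{ii}-E_{jj}$ to cancel all corrections and leave exactly the target, while respecting the trace constraint defining $\mathfrak{sl}_{cf}$. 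Once each such generator of $\mathfrak{sl}_{cf}$ is produced in $\mathfrak{I}$, the inclusion $\mathfrak{sl}_{cf}\subseteq\mathfrak{I}$ follows, completing the dichotomy.
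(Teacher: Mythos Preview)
Your proposal is correct and follows essentially the same route as the paper's proof: first isolate a single off-diagonal unit $E_{ij}$ in $\mathfrak{I}$ (the paper does this via $[[A,E_{ii}],E_{jj}]=a_{ij}E_{ij}-a_{ji}E_{ji}$ rather than the associative formula $E_{ip}AE_{qj}$ you cite, but the effect is the same), then bracket that unit against arbitrary elements of $\mathfrak{gl}_{cf}$ to realize any row/column pattern subject only to the trace-zero constraint, and take finite sums to reach all of $\mathfrak{sl}_{fr}$. The paper's Step~3 computes $[A,E_{ij}]$ for arbitrary $A$ and obtains the full ``cross'' (row~$i$ plus column~$j$) at once, whereas you take $[E_{i_0j_0},B]$ with $B$ row-supported to produce one row at a time plus a finitely supported traceless diagonal correction; this is a cosmetic repackaging of the same computation, and your bookkeeping step works exactly as you anticipate since the corrections sum to trace $-\mathrm{Tr}(M)=0$.
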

\begin{proof}
Step 1: We show that if $A$ is diagonal matrix and $a_{ii}\not=a_{jj}$ then $E_{ij}\in\mathfrak{I}$.

\noindent We have $E_{ij}=(a_{ii}-a_{jj})(a_{ii}-a_{jj})^{-1}E_{ij}=[A,(a_{ii}-a_{jj})^{-1}E_{ij}]$.\par
Step 2: We show that if $A$ is not a diagonal matrix and $a_{ij}\not= 0$ then for some $k\not=i$ there  is $E_{ik}\in\mathfrak{I}$.\\
We have
$$[A,E_{ii}]=\sum_{n\in\mathbb{N}}a_{in}E_{in}-\sum_{m\in\mathbb{N}}a_{mi}E_{mi}\in\mathfrak{I}$$ so
$$[\sum_{n\in\mathbb{N}}a_{in}E_{in}-\sum_{m\in\mathbb{N}}a_{mi}E_{mi},E_{jj}]=a_{ij}E_{ij}-a_{ji}E_{ji}\in\mathfrak{I}.$$
If $a_{ji}=0$ we obtain $a_{ij}E_{ij}\in\mathfrak{I}$ (and $E_{ij}\in\mathfrak{I}$, because $\mathfrak{I}$ is the linear space). If $a_{ji}\not= 0$ we have
$$[a_{ij}E_{ij}-a_{ji}E_{ji},E_{ij}]=-a_{ji}E_{jj}+a_{ji}E_{ii}\in\mathfrak{I}$$
so $E_{ii}-E_{jj}\in\mathfrak{I}$. Using step $1$ we obtain $E_{ik}\in\mathfrak{I}$ for all $k\not= i$ if $\textrm{char }\mathbb{K}\not=2$ or
$E_{ik}\in\mathfrak{I}$ for all $k\notin \{i,j\}$ if $\textrm{char }\mathbb{K}=2$.\par
Step 3: We show that if for some $i\not= j$ $E_{ij}\in\mathfrak{I}$ then $\mathfrak{I}$ contains all matrices $B\in\mathfrak{sl}_{fr}$.\\
For every matrix $A\in\mathfrak{gl}_{cf}$ we have
$$[A,E_{ij}]=\sum_{m\in\mathbb{N}}a_{mi}E_{mj}-\sum_{n\in\mathbb{N}}a_{jn}E_{in}=\sum_{m=i\vee n=j}b_{mn}E_{mn},$$
where $b_{ij}=a_{ii}-a_{jj}$, $b_{in}=-a_{jn}$ for $n\notin\{i,j\}$, $b_{mj}=a_{mi}$ for $m\notin\{i,j\}$, $b_{ii}=-a_{ji}$ and $b_{jj}=a_{ji}$. Because $A$ is
an arbitrary matrix we can choose entries $b_{mn}$ in any way, with condition $b_{ii}=-b_{jj}$ (which means that the trace is equal to zero). In particular we obtain
that for all $m\not=j$
$E_{mj}\in\mathfrak{I}$ and for all $n\not=i$ $E_{in}\in\mathfrak{I}$. Repeating the argument from the beginning of the paragraph we obtain
$\sum_{m=i\vee n=j}b_{mn}E_{mn}\in\mathfrak{I}$ for all natural numbers $i\not=j$. Since $\mathfrak{I}$ is a linear space, finite sum of matrices of this type
also is in $\mathfrak{I}$ and the proof is complete.
\end{proof}


\begin{lem}\label{gl}
Let  $\mathfrak{I}\triangleleft \mathfrak{gl}_{cf}$. Then either $\mathfrak{I}\subset \mathfrak{d}_{sc} \oplus \mathfrak{gl}_{fr}$ or $\mathfrak{I}=\mathfrak{gl}_{cf}$.  
\end{lem}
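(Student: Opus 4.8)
The plan is to show that an ideal $\mathfrak{I}$ possessing any element outside $\mathfrak{d}_{sc} \oplus \mathfrak{gl}_{fr}$ must already be all of $\mathfrak{gl}_{cf}$. First I would record that $\mathfrak{d}_{sc} \subseteq \mathfrak{d}_{sc} \oplus \mathfrak{gl}_{fr}$, so the hypothesis $\mathfrak{I} \not\subseteq \mathfrak{d}_{sc} \oplus \mathfrak{gl}_{fr}$ forces $\mathfrak{I} \not\subseteq \mathfrak{d}_{sc}$; Lemma \ref{sl} then gives $\mathfrak{sl}_{fr} \subseteq \mathfrak{I}$. In particular every matrix unit $E_{ij}$ with $i \neq j$ and every difference $E_{ii} - E_{jj}$ already lies in $\mathfrak{I}$. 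Fixing a witness $M \in \mathfrak{I} \setminus (\mathfrak{d}_{sc} \oplus \mathfrak{gl}_{fr})$, the goal becomes to manufacture, inside $\mathfrak{I}$, an arbitrary column-finite matrix $T$; since the diagonal and off-diagonal matrices span $\mathfrak{gl}_{cf}$ and both are subspaces, it is enough to show that $\mathfrak{I}$ contains every off-diagonal column-finite matrix and every diagonal matrix.

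The reduction I would aim for is that $\mathfrak{I}$ contains a diagonal matrix $D = \sum_i d_i E_{ii}$ whose diagonal sequence $(d_i)$ is not eventually constant (equivalently $D \notin \mathfrak{d}_{sc} \oplus \mathfrak{gl}_{fr}$). If the off-diagonal part $N$ of $M$ occupies only finitely many rows, then $N \in \mathfrak{gl}_{fr}$ with $\Tr N = 0$, so $N \in \mathfrak{sl}_{fr} \subseteq \mathfrak{I}$ and hence the diagonal part $D = M - N$ lies in $\mathfrak{I}$ and inherits non-eventual-constancy from $M$. If instead $M$ has off-diagonal entries in infinitely many rows, I would first extract an infinite off-diagonal element of $\mathfrak{I}$: bracketing $M$ with a diagonal matrix $C$ with entries in $\{0,1\}$ encoding a suitable $2$-colouring of $\mathbb{N}$ gives $[M,C]$, whose $(i,j)$-entry equals $m_{ij}(c_j-c_i)$, hence equals $m_{ij}$ up to sign exactly on the colour-crossing positions; a greedy colouring keeps infinitely many nonzero entries spread over infinitely many rows. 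From such an element one selects an infinite family of pairwise-disjoint positions (an infinite matching, available because the matrix is column-finite with nonzero entries in infinitely many rows) and brackets it with the transposed matching to land, after the cross terms cancel, on an infinite diagonal $\sum_k (E_{a_k a_k} - E_{b_k b_k}) \in \mathfrak{I}$, which is again not eventually constant. Both cases thus reduce to the diagonal situation.

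Granting a not-eventually-constant diagonal $D \in \mathfrak{I}$, the heart of the argument is a bilinear solving step. For a target off-diagonal column-finite $S$ I would seek column-finite $B$ with $[D,B] = S$, which entrywise reads $(d_i - d_j)\,b_{ij} = s_{ij}$ and is solvable on every pair with $d_i \neq d_j$; the remaining entries, where $d_i = d_j$, are mopped up through an intermediate index $r$ with $d_r \neq d_i$ via $[E_{ir}, E_{rj}] = E_{ij}$, organised, with suitably chosen injective intermediaries, into a single bracket of two infinite matrices so that the infinitely many corrections are produced simultaneously. For the diagonal part, nested brackets of the shape $[\sum_k \alpha_k E_{i_k j_k}, \sum_k \delta_k E_{j_k i_k}] = \sum_k \alpha_k \delta_k (E_{i_k i_k} - E_{j_k j_k})$ (all indices distinct) put infinite diagonal matrices with freely prescribable coefficients into $\mathfrak{I}$, and finitely many such matchings together with the $E_{ii} - E_{jj}$ from $\mathfrak{sl}_{fr}$ assemble an arbitrary diagonal matrix. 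Combining the two, $T \in \mathfrak{I}$ for every column-finite $T$, whence $\mathfrak{I} = \mathfrak{gl}_{cf}$.

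The step I expect to be the main obstacle is precisely this infinite assembly: realizing a prescribed infinite column-finite matrix as a finite combination of brackets of elements already known to lie in $\mathfrak{I}$. Unlike the finite-dimensional case one cannot argue with eigenvalues or genericity, since $K$ is arbitrary and possibly finite, so the coincidences $d_i = d_j$ cannot be wished away and must be resolved combinatorially; and one must constantly verify that the auxiliary matrices $B$, the matchings, and the intermediate-index families remain column-finite and that all unwanted cross terms cancel. Keeping this bookkeeping correct, rather than any single algebraic identity, is the crux of the proof.
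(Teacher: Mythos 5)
Your skeleton agrees with the paper's: invoke Lemma \ref{sl} to get $\mathfrak{sl}_{fr}\subseteq\mathfrak{I}$, reduce to a diagonal matrix in $\mathfrak{I}$ whose diagonal sequence is not eventually constant, and then use that diagonal to generate everything. (Your case split by the number of \emph{rows} carrying off-diagonal entries is in fact slightly cleaner than the paper's split by the number of off-diagonal \emph{entries}.) But the two steps you defer are exactly the substance of the proof, and in both places the specific claim you make is not true as stated. First, in the reduction: after the colouring bracket your element is supported on \emph{all} colour-crossing positions, not just on the chosen matching $\{(a_k,b_k)\}$, so bracketing with the transposed matching $\sum_k E_{b_k a_k}$ produces terms $m_{i,b_k}E_{i,a_k}$ for every $i\neq a_k$ with $(i,b_k)$ in the support; these cross terms do not cancel. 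Eliminating them is the hardest part of the paper's Step 1: the positions $(i_n,j_n)$ must be chosen with strong sparseness conditions (using column-finiteness to kill the lower-triangular block entries), and then a recursively defined matrix $\sum_{n}\sum_{m>n}x_{h_n,h_m}E_{h_n,h_m}$ is bracketed in to wipe out the off-matching terms \emph{before} the transposed matching is applied. Nothing in your sketch supplies this construction.

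Second, the endgame. Your mopping-up of the positions with $d_i=d_j$ amounts to a single bracket $[P,Q]$ with $P=\sum s_{ij}E_{i,r(i,j)}$ and $Q=\sum E_{r(i,j),j}$; but for $[P,Q]$ to lie in $\mathfrak{I}$ you need $P$ or $Q$ to lie in $\mathfrak{I}$ already, and these are precisely the kind of infinite off-diagonal column-finite matrices you are trying to prove lie in $\mathfrak{I}$. The identity $[E_{ir},E_{rj}]=E_{ij}$ only helps termwise, because each $E_{ir}$ lies in $\mathfrak{sl}_{fr}$, and an ideal is closed only under \emph{finite} sums. (One can rescue $P$ by producing it as $[D,B']$ with intermediaries chosen so that $d_{r(i,j)}\neq d_i$, but then the product $QP$ contributes junk at the intermediary positions which needs its own cleanup, and you have not shown this terminates.) The paper sidesteps all of this with a different endgame: from the non-constant diagonal it manufactures the single element $\sum_i E_{i,i+1}\in\mathfrak{I}$ (its Steps 2--4, themselves requiring several matching tricks), and then observes that $[X,\sum_i E_{i,i+1}]=A$ has an explicit column-finite solution $X$ for \emph{every} $A\in\mathfrak{gl}_{cf}$, so that this one element forces $\mathfrak{I}=\mathfrak{gl}_{cf}$. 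I would steer you toward producing that shift element rather than solving $[D,B]=S$ with patches; as written, your proposal correctly locates the obstacles but does not overcome them.
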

\begin{proof}
Step 1: We show that if  $A\in\mathfrak{I}\setminus(\mathfrak{d}_{sc}\oplus\mathfrak{gl}_{fr})$ is not a diagonal matrix then $\mathfrak{I}$ contains a diagonal matrix $D$ for which the set
$\{i\in\mathbb{N}: d_{ii}\not=d_{i+1,i+1}\}$ is infinite.\\
Assume at first that $A$ has finite number of nonzero entries beyond diagonal. We know that for all natural numbers $i\not=j$ we have $E_{ij}\in\mathfrak{I}$ (lemma
\ref{sl}). Since $\sum_{i\not=j}a_{ij}E_{ij}$ is the finite sum,  $A-\sum_{i\not=j}a_{ij}E_{ij}$ is a diagonal matrix, $A-\sum_{i\not=j}a_{ij}E_{ij}\in\mathfrak{I}$
and $A-\sum_{i\not=j}a_{ij}E_{ij}\notin\mathfrak{gl}_{fr} \oplus \mathfrak{d}_{sc}$.\\
Assume now that $A$ has infinite number of nonzero entries beyond the diagonal. 
Choose a sequence of nonzero entries $a_{i_1,j_1}$, $a_{i_2,j_2}$, $a_{i_3,j_3}$,
\dots in such a way that
for all $n\in\mathbb{N}$ $i_1>1$, $j_1>1$, $i_n\not=j_n$, $i_{n+1}>1+i_n$, $i_{n+1}>1+j_n$, $j_{n+1}>1+j_n$, $j_{n+1}>1+i_n$, $1+k_n<i_{n+1}$ and $1+k_n<j_{n+1}$
where $k_n$ is the  number of row with last nonzero entry in column $j_n$. We can assume also that $i_n>j_n$ for all $n$ or $i_n<j_n$ for all $n$. 
We write the
proof for $i_n<j_n$,  a second case is similar (by transposing the corresponding matrices). We have
$$[\sum_{n\in\mathbb{N}}E_{i_n,i_n},A]=\sum_{n\in\mathbb{N}}\sum_{m\in\mathbb{N}}a_{i_n,m}E_{i_n,m}-\sum_{n\in\mathbb{N}}\sum_{m\in\mathbb{N}}a_{n,i_m}E_{n,i_m}$$
and
$$[\sum_{n\in\mathbb{N}}\sum_{m\in\mathbb{N}}a_{i_n,m}E_{i_n,m}-\sum_{n\in\mathbb{N}}\sum_{m\in\mathbb{N}}a_{n,i_m}E_{n,i_m},\sum_{n\in\mathbb{N}}E_{j_n,j_n}]=$$
$$=\sum_{n\in\mathbb{N}}\sum_{m\in\mathbb{N}}a_{i_n,j_m}E_{i_n,j_m}+\sum_{n\in\mathbb{N}}\sum_{m\in\mathbb{N}}a_{j_n,i_m}E_{j_n,i_m}\in\mathfrak{I}$$
so
\begin{displaymath}\begin{split}
&[\sum_{n\in\mathbb{N}}E_{i,i+1},\sum_{n\in\mathbb{N}}\sum_{m\in\mathbb{N}}a_{i_n,j_m}E_{i_n,j_m}+\sum_{n\in\mathbb{N}}\sum_{m\in\mathbb{N}}a_{j_n,i_m}E_{j_n,i_m}
]=\\
&=\sum_{n\in\mathbb{N}}\sum_{m\in\mathbb{N}}a_{i_n,j_m}E_{i_n-1,j_m}+\sum_{n\in\mathbb{N}}\sum_{m\in\mathbb{N}}a_{j_n,i_m}E_{j_n-1,i_m}-\\
&-\sum_{n\in\mathbb{N}}\sum_{m\in\mathbb{N}}a_{i_n,j_m}E_{i_n,j_m+1}-\sum_{n\in\mathbb{N}}\sum_{m\in\mathbb{N}}a_{j_n,i_m}E_{j_n,i_m+1}\in\mathfrak{I}.
\end{split}\end{displaymath}
We obtain
\begin{displaymath}\begin{split}
&[\sum_{n\in\mathbb{N}}E_{i_n-1,i_n-1},\sum_{n\in\mathbb{N}}\sum_{m\in\mathbb{N}}a_{i_n,j_m}E_{i_n-1,j_m}+\sum_{n\in\mathbb{N}}\sum_{m\in\mathbb{N}}
a_{j_n,i_m}E_{j_n-1,i_m}-\\
&-\sum_{n\in\mathbb{N}}\sum_{m\in\mathbb{N}}a_{i_n,j_m}E_{i_n,j_m+1}-\sum_{n\in\mathbb{N}}\sum_{m\in\mathbb{N}}a_{j_n,i_m}E_{j_n,i_m+1}]=\\
&=\sum_{n\in\mathbb{N}}\sum_{m\in\mathbb{N}}a_{i_n,j_m}E_{i_n-1,j_m}\in\mathfrak{I}.
\end{split}\end{displaymath}

We have 
$$\sum_{n\in\mathbb{N}}\sum_{m\in\mathbb{N}}a_{i_n,j_m}E_{i_n-1,j_m}=\sum_{n\in\mathbb{N}}\sum_{m\geq n}a_{i_n,j_m}E_{i_n-1,j_m}$$
 because $1+k_n<i_{n+1}$.
Denote for simplicity $h_n=i_n-1$, $a_{i_n,j_m}=b_{h_n,j_m}$ so 
$$\sum_{n\in\mathbb{N}}\sum_{m\geq n}a_{i_n,j_m}E_{i_n-1,j_m}=\sum_{n\in\mathbb{N}}
\sum_{m\geq n}b_{h_n,j_m}E_{h_n,j_m}.$$ 
We define
$$x_{h_n,h_m}=\begin{cases}
-b_{h_m,j_m}^{-1}b_{h_n,j_m}&\textrm{for }m=n+1,\\
-b_{h_m,j_m}^{-1}(b_{h_n,j_m}-\sum_{k=n+1}^{m-1}x_{h_n,h_k}b_{h_k,j_m}) &\textrm{otherwise}.
\end{cases} $$
We have
$$[\sum_{n\in\mathbb{N}}\sum_{m>n}x_{h_n,h_m}E_{h_n,h_m},\sum_{n\in\mathbb{N}}\sum_{m\geq n}b_{h_n,j_m}E_{h_n,j_m}]=\sum_{n\in\mathbb{N}}b_{h_n,j_n}E_{h_n,j_n}.$$
Finally
$$[\sum_{n\in\mathbb{N}}b_{h_n,j_n}E_{h_n,j_n},\sum_{n\in\mathbb{N}}E_{j_n,h_n}]=\sum_{n\in\mathbb{N}}b_{h_n,j_n}E_{h_n,h_n}-\sum_{n\in\mathbb{N}}
b_{h_n,j_n}E_{j_n,j_n}\in\mathfrak{I}$$
is the required diagonal matrix.
\par
Step 2: We show that if $A$ is a diagonal matrix and if the set $$\mathbf{H}=\{i\in\mathbb{N}: a_{ii}\not=a_{i+1,i+1}\}$$ is infinite, then
$\sum_{i\in\mathbf{H}}E_{i,i+1}\in\mathfrak{I}$.
We have
\begin{displaymath}\begin{split}& [A,\sum_{i\in\mathbf{H}}(a_{ii}-a_{i+1,i+1})^{-1}E_{i,i+1}]=\\
&=\sum_{i\in\mathbf{H}}a_{ii}(a_{ii}-a_{i+1,i+1})^{-1}E_{i,i+1}-\sum_{i\in\mathbf{H}}a_{i+1,i+1}(a_{ii}-a_{i+1,i+1})^{-1}E_{i,i+1}=\\
&=\sum_{i\in\mathbf{H}}E_{i,i+1}.
\end{split}\end{displaymath}
\par
Step 3: Let $\mathbf{H}$ be an infinite subset of $\mathbb{N}$. We show that if $\sum_{i\in\mathbf{H}}E_{i,i+1}\in\mathfrak{I}$ then
$\sum_{i\in\mathbf{G}}E_{i,i+1}\in\mathfrak{I}$, where $\mathbf{H}\subset\mathbf{G}\subset\mathbb{N}$ and $\mathbb{N}\setminus\mathbf{G}$ does not contain two
consecutive natural numbers.\\
If $\mathbf{H}$ satisfies the assumptions of this step we can take $\mathbf{H}=\mathbf{G}$. Otherwise
define the family of sets $\{\mathbf{Z}_n\}_{n\in\mathbb{N}}$ as $\mathbf{Z}_1=\emptyset$ and
$$\mathbf{Z}_n=\begin{cases}\{n\}\mbox{  }\,\,\textrm{if }\{n-1,n\}\cap\mathbf{H}=\emptyset \wedge \mathbf{Z}_{n-1}=\emptyset,\\
\emptyset\mbox{  } \,\,\,\,\,\,\,\,\,\textrm{otherwise.} \end{cases}$$
Let $\mathbf{Z}=\cup_{n=1}^{\infty}\mathbf{Z}_n$. Note that $\mathbf{Z}\cap\mathbf{H}=\emptyset$ and $\mathbb{N}\setminus (\mathbf{Z}\cup\mathbf{H})$ does not
contain two consecutive natural numbers. Denote $G:=\mathbf{Z}\cup\mathbf{H}$. Let $f:\mathbf{Z}\rightarrow\mathbf{H}$ be a strictly increasing function and
$f(n)>n$ for all $n\in\mathbf{Z}$. We have
$$[\sum_{i\in\mathbf{Z}}E_{i,f(i)},\sum_{j\in\mathbf{H}}E_{j,j+1}]=\sum_{i\in\mathbf{Z}}E_{i,f(i)+1}\in\mathfrak{I}$$
so
$$[\sum_{i\in\mathbf{Z}}E_{i,f(i)+1},\sum_{i\in\mathbf{Z}}E_{f(i)+1,i}]=\sum_{i\in\mathbf{Z}}E_{i,i+1}\in\mathfrak{I}.$$
Finally $$\sum_{i\in\mathbf{H}}E_{i,i+1}+\sum_{i\in\mathbf{Z}}E_{i,i+1}=\sum_{i\in\mathbf{G}}E_{i,i+1}\in\mathfrak{I}.$$\par
Step 4: We show that if $\sum_{i\in\mathbf{G}}E_{i,i+1}\in\mathfrak{I}$ and $\mathbb{N}\setminus\mathbf{G}$ does not contain two
consecutive natural numbers then $\sum_{n\in\mathbb{N}}E_{i,i+1}\in\mathfrak{I}$.\\
We can assume that $1\in\mathbf{G}$ because $E_{12}\in\mathbf{G}$ (from lemma \ref{sl}) and $\mathfrak{I}$ is a linear space. Denote
$\mathbb{N}\setminus\mathbf{G}=\{n_1,n_2,n_3,\dots\}$ and define sets $\mathbf{Z}_1=\{n_1,n_3,n_5,\dots\}$ and $\mathbf{Z}_2=\{n_2,n_4,n_6,\dots\}$. Observe that
for all $m,n\in\mathbf{Z}_1$ or $m,n\in\mathbf{Z}_2$ is satisfied inequality $|m-n|\geq 4$. We have
$$[\sum_{j\in\mathbf{Z}_1}E_{j,j-1},\sum_{i\in\mathbf{G}}E_{i,i+1}]=\sum_{j\in\mathbf{Z}_1}E_{jj}-\sum_{j\in\mathbf{Z}_1}E_{j-1,j-1}\in\mathfrak{I}$$
so
$$[\sum_{j\in\mathbf{Z}_1}E_{jj}-\sum_{j\in\mathbf{Z}_1}E_{j-1,j-1},\sum_{i\in\mathbf{Z}_1}E_{i,i+1}]=\sum_{j\in\mathbf{Z}_1}E_{j,j+1}\in\mathfrak{I}$$
and 
$$\sum_{i\in\mathbf{G}}E_{i,i+1}+\sum_{j\in\mathbf{Z}_1}E_{j,j+1}=\sum_{i\in\mathbf{G}\cup\mathbf{Z}_1}E_{i,i+1}\in\mathfrak{I}.$$ 
Analogously
$$[\sum_{j\in\mathbf{Z}_2}E_{j,j-1},\sum_{i\in\mathbf{G}\cup\mathbf{Z}_1}E_{i,i+1}]=\sum_{j\in\mathbf{Z}_2}E_{jj}-\sum_{j\in\mathbf{Z}_2}E_{j-1,j-1}\in
\mathfrak{I}$$
so
$$[\sum_{j\in\mathbf{Z}_2}E_{jj}-\sum_{j\in\mathbf{Z}_2}E_{j-1,j-1},\sum_{i\in\mathbf{Z}_2}E_{i,i+1}]=\sum_{j\in\mathbf{Z}_2}E_{j,j+1}\in\mathfrak{I}$$
and finally 
$$\sum_{i\in\mathbf{G}\cup\mathbf{Z}_1}E_{i,i+1}+\sum_{j\in\mathbf{Z}_2}E_{j,j+1}=\sum_{n\in\mathbb{N}}E_{n,n+1}\in\mathfrak{I}.$$
\par
Step 5: We show that if $\sum_{i\in\mathbb{N}}E_{i,i+1}\in\mathfrak{I}$ then $\mathfrak{I}=\mathfrak{gl}_{cf}$.\\
We show that equation $[X,\sum_{i\in\mathbb{N}}E_{i,i+1}]=A$ has a solution for all $A\in\mathfrak{gl}_{cf}$. This equation is equivalent to a system of equations:
$$\begin{cases}
x_{m,n-1}-x_{m+1,n}=a_{m,n}\\
x_{m+1,1}=a_{m,1}
\end{cases}\mbox{  } n,m\in\mathbb{N},1<n,$$
which has, for example, the solution
$$
\begin{cases}
x_{1n}=0\mbox{  } n\in\mathbb{N},\\
x_{m+1,1}=a_{m,1}\mbox{  } m\in\mathbb{N},\\
x_{m+1,n}=x_{m,n-1}-a_{mn}\mbox{  } m,n\in\mathbb{N},1<n.
\end{cases}  
$$
\end{proof} 

\noindent {\sc Proof of Theorem 1.1.}

We have $\dim \{0\}=0$ and $\dim\mathfrak{d}_{sc}=1$ so there are no proper ideal in interval $\langle\{0\},\mathfrak{d}_{sc}\rangle$. 
Similarly, since $\mathfrak{sl}_{fr}$ in $\mathfrak{d}_{sc} \oplus \mathfrak{sl}_{fr}$, $\mathfrak{gl}_{fr}$ in $\mathfrak{d}_{sc} \oplus \mathfrak{gl}_{fr}$ and
$\mathfrak{sl}_{fr}$ in $\mathfrak{gl}_{fr}$ have codimension $1$, it follows that intervals
 $\langle\mathfrak{sl}_{fr}, \mathfrak{d}_{sc} \oplus \mathfrak{sl}_{fr}\rangle$, $\langle\mathfrak{gl}_{fr}, \mathfrak{d}_{sc} \oplus \mathfrak{gl}_{fr}\rangle$ and
 $\langle\mathfrak{sl}_{fr}, \mathfrak{gl}_{fr}\rangle$
do not contain proper ideals. From Lemma $2.2$ and $2.3$  it follows that the same holds for intervals 
$\langle\{0\}, \mathfrak{sl}_{fr}\rangle$, $\langle\mathfrak{d}_{sc} \oplus \mathfrak{gl}_{fr}, \mathfrak{gl}_{cf}\rangle$.

As a vector space $\mathfrak{d}_{sc} \oplus \mathfrak{gl}_{fr}$ is generated by 
$E, E_{1,1}$ and $\mathfrak{sl}_{fr}$, so the span of $\alpha E + \beta E_{1,1}$ and $\mathfrak{sl}_{fr}$ gives a codimension $1$ subspace of $\mathfrak{d}_{sc} \oplus \mathfrak{gl}_{fr}$ (for fixed $\alpha, \beta \in K$). It is easy to check that it is an ideal of $\mathfrak{d}_{sc} \oplus \mathfrak{gl}_{cf}$. If $\alpha \not=0$ and $\beta =0$, then it coincides with $\mathfrak{d}_{sc} \oplus \mathfrak{sl}_{fr}$, if  $\alpha =0$ and $\beta \not=0$, then it coincides with $\mathfrak{gl}_{fr}$.

To avoid repetitions we use one-parameter sets $\mathfrak{J}_{k}$. It is clear that $\mathfrak{d}_{sc} \oplus \mathfrak{sl}_{fr}$, $\mathfrak{J}_{k}$ for $k\not=0$ and $\mathfrak{gl}_{fr}$ are all ideals of $\mathfrak{gl}_{cf}$ of codimension $1$ in 
 $\mathfrak{d}_{sc} \oplus \mathfrak{gl}_{cf}$. 
 
The theorem is proved.   \qed

\bigskip

The Corollary $1.2.$ follows from homomorphism theorems for Lie algebras. In [HKZ] it was shown that $\sum_{i=1}^{\infty} E_{i,i+2}$ belongs to the derived Lie subalgebra of $\mathfrak{n}(\mathbb{N}, R)$ and so to the derived Lie subalgebra of $\mathfrak{gl}_{cf}(\mathbb{N},R)$ which shows the first part of Corollary $1.3$. The second part is the obvious consequence of Theorem $1.1.$


\section{Derivations of column-finite infinite matrices}

In this section by $R$ we denote a commutative, associative unital ring. The $R$-linear  map 
$$\varphi:  \mathfrak{gl}_{cf}(\mathbb{N}, R)  \rightarrow  \mathfrak{gl}_{cf}(\mathbb{N}, R)$$
is called a derivation if 
$$\varphi ([X,Y])=[\varphi (X),Y]+[X,\varphi (Y)].$$

The set 
$Der(\mathfrak{gl}_{cf}(\mathbb{N}, R))$ of all derivations of $\mathfrak{gl}_{cf}(\mathbb{N}, R)$ 
forms a Lie algebra.
For a fixed 
$A \in \mathfrak{gl}_{cf}(\mathbb{N}, R)$ 
the map 
$ \ad  A: X \rightarrow [A,X] $ 
is a derivation, called inner derivation. A derivation is called central  if $Im(\varphi)$ is a subset of the center.

By $\mathcal{ND}(\mathbb{N},R)$ we denote a subset  of $\mathfrak{gl}_{cf}(\mathbb{N}, R)$  of all matrices which have only zeros on the  main diagonal and by $\mathfrak{d}=\mathfrak{d}(\mathbb{N}, R)$ the Lie subalgebra of all diagonal matrices. Any matrix $X \in \mathfrak{gl}_{cf}(\mathbb{N}, R)$  is a sum of two matrices $X= D+N$ where $D$ is diagonal and $N \in\mathcal{ ND}(\mathbb{N}, R)$.

\begin{lem}
For each derivation $\varphi \in Der(\mathfrak{gl}_{cf}(\mathbb{N}, R))$ there exists  $A$ in $\mathcal{ND}(\mathbb{N}, R)$ such that $(\varphi -\ad A)(\mathfrak{d})\subseteq  \mathfrak{d}$.
\end{lem}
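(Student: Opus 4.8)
The plan is to pin down the shape of $\varphi(E_{ii})$ for each $i$, use its off-diagonal part to define $A$, and then upgrade from the matrix units $E_{ii}$ to arbitrary (possibly infinitely supported) diagonal matrices. Throughout I write $\varphi(E_{ii})=\sum_{p,q}c^{(i)}_{pq}E_{pq}$ and split each image into its diagonal and off-diagonal parts.

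First I would determine where $\varphi(E_{ii})$ can be nonzero off the diagonal. For $p\neq i$ the diagonal idempotents commute, $[E_{ii},E_{pp}]=0$, so applying $\varphi$ gives $[\varphi(E_{ii}),E_{pp}]+[E_{ii},\varphi(E_{pp})]=0$. Reading off the $(k,p)$-entry with $k\notin\{i,p\}$ kills the second bracket (it is supported only in row and column $i$) and forces $c^{(i)}_{kp}=0$; the symmetric reading of the $(p,l)$-entry forces $c^{(i)}_{pl}=0$ for $l\notin\{i,p\}$. Hence the off-diagonal part of $\varphi(E_{ii})$ is supported entirely in row $i$ and column $i$.

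Next I would define $A\in\mathcal{ND}(\mathbb{N},R)$ column by column: set $a_{ki}:=c^{(i)}_{ki}$ for $k\neq i$ and $a_{ii}:=0$. Since column $i$ of $A$ is just the off-diagonal part of column $i$ of $\varphi(E_{ii})\in\mathfrak{gl}_{cf}(\mathbb{N},R)$, it is finite, so $A$ is column-finite and off-diagonal, i.e. $A\in\mathcal{ND}(\mathbb{N},R)$. A direct computation shows $[A,E_{ii}]$ is off-diagonal with column $i$ equal to $(a_{ki})_{k\neq i}$ and row $i$ equal to $(-a_{il})_{l\neq i}$. The column $i$ of $[A,E_{ii}]$ matches the column $i$ of $\varphi(E_{ii})$ by construction; for the row $i$ I would invoke the relation coming from $[E_{ii},E_{ll}]=0$, whose $(i,l)$-entry after applying $\varphi$ reads $c^{(i)}_{il}+c^{(l)}_{il}=0$, i.e. $c^{(i)}_{il}=-a_{il}$. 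Thus $[A,E_{ii}]$ reproduces the entire off-diagonal part of $\varphi(E_{ii})$, and so $\psi:=\varphi-\ad A$ satisfies $\psi(E_{ii})\in\mathfrak{d}$ for every $i$.

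It remains to show $\psi(D)\in\mathfrak{d}$ for an arbitrary diagonal $D$, and this is the step that cannot be done by naive linearity because $D$ is in general an infinite combination of the $E_{ii}$. The device is to avoid expanding $D$ altogether: since diagonal matrices commute, $[D,E_{ll}]=0$, and applying the derivation $\psi$ gives $[\psi(D),E_{ll}]+[D,\psi(E_{ll})]=0$; the second bracket vanishes because $\psi(E_{ll})$ and $D$ are both diagonal, leaving $[\psi(D),E_{ll}]=0$ for every $l$. Since $[M,E_{ll}]=0$ for all $l$ forces every off-diagonal entry of $M$ to vanish, $\psi(D)$ is diagonal, which completes the proof. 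The main obstacle is precisely this last passage from the idempotents to general diagonal matrices: in the finite case it is routine linearity, whereas here one must sidestep the infinite sums, and the commutator identity above is what makes that possible; a secondary point to handle carefully is verifying the column-finiteness of $A$ together with the row/column consistency in the construction above.
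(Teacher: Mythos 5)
Your proof is correct. The construction of $A$ coincides with the paper's: both take $A$ to be the off-diagonal matrix whose $i$-th column records the off-diagonal column-$i$ entries of $\varphi(E_{ii})$, use $[E_{ii},E_{ll}]=0$ to get the row/column consistency $c^{(i)}_{il}=-c^{(l)}_{il}$, and (as you rightly make explicit, where the paper leaves it implicit) read off column-finiteness of $A$ from that of $\varphi(E_{jj})$. Where you genuinely diverge is in the passage from the idempotents $E_{ii}$ to an arbitrary $D\in\mathfrak{d}$, which is the real content of the lemma since $D$ is an infinite combination of the $E_{ii}$ and naive linearity is unavailable. The paper handles this by explicit entry bookkeeping: for each off-diagonal position $(i,j)$ it writes $D=(D-d_{ii}E_{ii}-d_{jj}E_{jj})+d_{ii}E_{ii}+d_{jj}E_{jj}$, shows the first summand contributes nothing at $(i,j)$, deduces $\varphi(D)_{ij}=d_{ii}h^{(i)}_{ij}+d_{jj}h^{(j)}_{ij}$, and then verifies that $[A,D]$ has exactly these off-diagonal entries. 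You instead observe that once each $\psi(E_{ll})$ is diagonal, $0=\psi([D,E_{ll}])=[\psi(D),E_{ll}]+[D,\psi(E_{ll})]=[\psi(D),E_{ll}]$ for every $l$, which forces all off-diagonal entries of $\psi(D)$ to vanish. Your route is shorter and avoids the entrywise computation entirely; the paper's route buys an explicit formula for the off-diagonal part of $\varphi(D)$, which it reuses to check $Y=[A,D]$ in its Step 3, but that check becomes unnecessary under your argument.
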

\begin{proof}
 We denote $\varphi(E_{kk})=[h^{(k)}_{ij}]$. Let $\varphi(D)=C$ where $D\in \mathfrak{d}$ and $C\in \mathfrak{gl}_{cf}(\mathbb{N}, R)$\\
Step 0: We prove that $h^{(k)}_{ij}$ can be different from zero only if $i=j$ or $i=k$ or $j=k$.\\
For any $d\in D$ we have $[d,E_{kk}]=0$ so
\begin{displaymath}\begin{split}
0=\varphi ([d,E_{kk}])&=[d,\varphi (E_{kk})]+[\varphi (d),E_{kk}]=\\
&=d\varphi (E_{kk})-\varphi (E_{kk})d+\varphi (d)E_{kk}-E_{kk}\varphi (d).
\end{split}\end{displaymath}
If $i\neq k$ and $j\neq k$ we have the equality $d_{ii}h^{(k)}_{ij}-h^{(k)}_{ij}d_{jj}=h^{(k)}_{ij}(d_{ii}-d_{jj})=0$. Matrix $d$ is arbitrary,  so we can assume
that $d_{ii}\neq d_{jj}$ if $i\neq j$. In this case we obtain $h^{(k)}_{ij}=0$.\par
Step 1: We prove that $c_{ij}=d_{ii}h^{(i)}_{ij}+d_{jj}h^{(j)}_{ij}$  if $i\not= j$.\\
Let $A=D-d_{ii}E_{ii}-d_{jj}E_{jj}$ and $\varphi (A)=B$.
We have $[E_{ii},A]=0$ so
\begin{displaymath}\begin{split}
0=\varphi ([E_{ii},a])&=[E_{ii},\varphi (a)]+[\varphi (E_{ii}),A]=\\
&=E_{ii}\varphi (A)-\varphi (A)E_{ii}+\varphi (E_{ii})A-A\varphi (E_{ii}).
\end{split}\end{displaymath}
We obtain equality $0=b_{ij}-0+0-0=b_{ij}$.
We have 
$$C=\varphi (D)=\varphi (A+d_{ii}E_{ii}+d_{jj}E_{jj})=\varphi (A)+d_{ii}\varphi (E_{ii})+d_{jj}\varphi (E_{jj})$$ so
$c_{ij}=0+d_{ii}h^{(i)}_{ij}+d_{jj}h^{(j)}_{ij}$.\par
\smallskip
Step 2: We prove that $h^{(i)}_{ij}=-h^{(j)}_{ij}$ for all  $i\neq j$.\\
Since
\begin{displaymath}\begin{split}
0&=\varphi ([E_{ii},E_{jj}])=[E_{ii},\varphi (E_{jj})]+[\varphi (E_{ii}),E_{jj}]=\\
&=E_{ii}\varphi (E_{jj})-\varphi (E_{jj})E_{ii}+\varphi(E_{ii})E_{jj}-E_{jj}\varphi (E_{ii})
\end{split}\end{displaymath}
we get $0=h^{(j)}_{ij}+h^{(i)}_{ij}$.\par
\smallskip
Step 3: Denote $\varphi(D)=C=X+Y$ where $X\in \mathfrak{d}$ and $Y\in \mathcal{ND}(\mathbb{N}, R)$. From the previous steps it follows
$$y_{ij}=d_{ii}h^{(i)}_{ij}+d_{jj}h^{(j)}_{ij}=d_{ii}h^{(i)}_{ij}-d_{jj}h^{(i)}_{ij}.$$
Let $A\in \mathcal{ND}(\mathbb{N}, R)$, $a_{ij}=-h^{(i)}_{ij}$. We have 
$$Y=D(-A)-(-A)D=AD-DA=[A,D],$$
and so $$\varphi(D)=X+Y=X+[A,D]=X+\ad A (D).$$
Finally 
$$\varphi(D)-\ad A (D)=(\varphi-\ad A)(D)=X.$$ \qed
\end{proof}

\begin{lem}
For each derivation $\varphi \in Der(\mathfrak{gl}_{cf}(\mathbb{N}, R))$  there exists  $B\in \mathfrak{gl}_{cf}(\mathbb{N}, R)$ such that $(\varphi -\ad B)(\mathfrak{d})\subseteq \mathfrak{d}$ and $(\varphi -\ad B)(\mathcal{ND}(\mathbb{N}, R))\subseteq \{0\}$.
\end{lem}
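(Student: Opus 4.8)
The plan is to subtract one inner derivation, arranged so that the remaining map is forced to be a \emph{central} derivation, and then to kill it on $\mathcal{ND}(\mathbb{N},R)$ using that every such matrix is a commutator. By the previous lemma I may fix $A\in\mathcal{ND}(\mathbb{N},R)$ so that $\psi_{0}:=\varphi-\ad A$ satisfies $\psi_{0}(\mathfrak d)\subseteq\mathfrak d$; the whole argument then analyses $\psi_{0}$ and manufactures a diagonal correction $D_{0}$, the required matrix being $B=A+D_{0}$. First I would pin $\psi_{0}$ down on the off\nobreakdash-diagonal units. For every diagonal $D$ one has $[D,E_{ij}]=(d_{ii}-d_{jj})E_{ij}$; applying $\psi_{0}$ and comparing the $(k,l)$\nobreakdash-entries of $\psi_{0}(E_{ij})=(c_{kl})$ gives $\big((d_{ii}-d_{jj})-(d_{kk}-d_{ll})\big)c_{kl}=0$ for every $(k,l)\neq(i,j)$. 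Since $i\neq j$ and $k\neq l$, the linear form $d_{ii}-d_{jj}-d_{kk}+d_{ll}$ is not identically zero, so choosing a diagonal on which it equals $1$ forces $c_{kl}=0$. Hence $\psi_{0}(E_{ij})=\lambda_{ij}E_{ij}$ for some $\lambda_{ij}\in R$.

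Feeding $[E_{ij},E_{jk}]=E_{ik}$ into $\psi_{0}$ yields the cocycle identity $\lambda_{ik}=\lambda_{ij}+\lambda_{jk}$ for distinct indices, and inserting a third index $c$ into $\lambda_{ab}$ and $\lambda_{ba}$ gives $\lambda_{ab}+\lambda_{ba}=0$; together these produce $\mu_{i}\in R$ with $\lambda_{ij}=\mu_{i}-\mu_{j}$ (take $\mu_{i}=\lambda_{i,1}$, $\mu_{1}=0$). With $D_{0}=\operatorname{diag}(\mu_{i})$ one has $[D_{0},E_{ij}]=\lambda_{ij}E_{ij}$, so $\psi:=\psi_{0}-\ad D_{0}=\varphi-\ad B$ kills every off\nobreakdash-diagonal unit and still maps $\mathfrak d$ into $\mathfrak d$. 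A short computation then upgrades this: for diagonal $P$, applying $\psi$ to $[P,E_{ij}]=(p_{i}-p_{j})E_{ij}$ gives $[\psi(P),E_{ij}]=0$ for all $i\neq j$, and since $\psi(P)$ is diagonal all its entries must agree. Thus $\psi(\mathfrak d)\subseteq\mathfrak d_{sc}$, the scalar (hence central) matrices.

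The crux is that a derivation need not respect infinite sums, so one \emph{cannot} argue $\psi(N)=\sum n_{ij}\psi(E_{ij})=0$ for $N\in\mathcal{ND}(\mathbb{N},R)$: a column\nobreakdash-finite matrix may have rows with infinitely many nonzero entries. I would break this with the key auxiliary claim that \emph{$\psi$ annihilates every matrix $R$ supported in a single row $k$}. This is exactly where column\nobreakdash-finiteness is essential: $[R,E_{jj}]$ is the single unit $r_{j}E_{kj}$ (or $0$), so $[\psi(R),E_{jj}]=\psi([R,E_{jj}])=0$ for every $j\neq k$ (using that $\psi(E_{jj})$ is central), while $[E_{kk},R]=R$ gives $\psi(R)=[E_{kk},\psi(R)]$; combining these pins $\psi(R)=0$. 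Granting the claim, for arbitrary $N\in\mathcal{ND}(\mathbb{N},R)$ the decomposition $[E_{kk},N]=R_{k}-C_{k}$ (row $k$ minus the finite column $k$) forces $[E_{kk},\psi(N)]=0$ for all $k$, so $\psi(N)$ is diagonal; and $[N,E_{qp}]$ splits into a finite column part and a single\nobreakdash-row part, so $\psi([N,E_{qp}])=0$ forces the diagonal entries of $\psi(N)$ to coincide. Hence $\psi(N)$ is scalar, i.e. $\psi(\mathcal{ND}(\mathbb{N},R))\subseteq\mathfrak d_{sc}$ as well. I expect this single\nobreakdash-row claim to be the main obstacle, since it is the only point where the infinitude of the rows is genuinely tamed, and diagonality, scalarity, and the finish below all rest on it.

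Finally, combining $\psi(\mathfrak d)\subseteq\mathfrak d_{sc}$ with $\psi(\mathcal{ND}(\mathbb{N},R))\subseteq\mathfrak d_{sc}$ gives $\psi(\mathfrak{gl}_{cf})\subseteq\mathfrak d_{sc}$, so $\psi$ is a \emph{central} derivation. For such a map $\psi([X,Y])=[\psi X,Y]+[X,\psi Y]=0$, because $\psi X,\psi Y$ are central; thus $\psi$ vanishes on the derived subalgebra. By the explicit (purely additive, hence ring\nobreakdash-valid) solution of $[X,\sum_i E_{i,i+1}]=N$ constructed in the proof of Lemma~\ref{gl}, every $N\in\mathcal{ND}(\mathbb{N},R)$ is a commutator $N=[X,S]$ with $X\in\mathfrak{gl}_{cf}$ (equivalently $\mathfrak{gl}_{cf}$ is perfect). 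Therefore $\psi(N)=0$, which yields $(\varphi-\ad B)(\mathcal{ND}(\mathbb{N},R))\subseteq\{0\}$, while $(\varphi-\ad B)(\mathfrak d)=\psi(\mathfrak d)\subseteq\mathfrak d_{sc}\subseteq\mathfrak d$, as required.
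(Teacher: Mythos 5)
Your proof is correct, and while its first half runs parallel to the paper's, the second half takes a genuinely different route. Like the paper's Steps 1--4 you first show $(\varphi-\ad A)(E_{ij})=\lambda_{ij}E_{ij}$ and that $\psi$ acts by a scalar on $\mathfrak{d}$; your $D_0=\mathrm{diag}(\mu_i)$ with $\mu_i=\lambda_{i1}$ is exactly the diagonal matrix $D$ (with $d_{ii}=-(x_1+\cdots+x_{i-1})$) that the paper only produces at the end of its Step 7 --- you simply subtract it earlier so that all off-diagonal units are killed at once. The divergence is in passing from units to an arbitrary $N\in\mathcal{ND}(\mathbb{N},R)$ with infinitely long rows, which a derivation cannot be pushed through term by term. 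The paper does this by hand: its Step 5 shows $\psi$ preserves the sets $\beta^{(i,j)}$ of single-row matrices supported in columns $k\geq j$, Step 6 deduces that the support of $\psi(N)$ is contained in that of $N$, and Step 7 exhibits $\psi(N)=[D,N]$ entry by entry. You instead prove the sharper claim that your normalized $\psi$ annihilates single-row matrices outright (a cleaner bracket argument playing the role of the paper's Step 5), deduce that $\psi(N)$ is scalar, and then finish structurally: a derivation with central image vanishes on commutators, and every element of $\mathfrak{gl}_{cf}$ is a commutator $[X,\sum_i E_{i,i+1}]$ by the explicit recursion in Step 5 of the proof of Lemma \ref{gl}, which as you correctly note is purely additive (hence valid over any $R$) and produces a column-finite $X$. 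This buys a shorter and more conceptual endgame at the price of importing perfectness from Section 2, whereas the paper's computation is self-contained within Section 3. Two small points to tidy: in the single-row claim the row part of $[N,E_{qp}]$ may carry a diagonal entry $n_{pq}E_{qq}$, so $\psi$ of it is scalar rather than zero (the scalar contributions from the row and column parts then cancel, so your conclusion stands); and in diagonalizing $\psi_0$ on the units, the entry $(k,l)=(j,i)$ only yields $2(d_{ii}-d_{jj})c_{ji}=0$, so the ``choose a diagonal making the form equal to $1$'' step does not literally apply there when $2$ is not invertible in $R$ --- but the paper's own Step 1 has precisely the same blind spot at the $(i+1,i)$ entry, so this is not a defect of your argument relative to the paper's.
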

\begin{proof}
For each derivation $\varphi$ we have $$(\varphi -\ad A)(\mathfrak{d})\subseteq \mathfrak{d}$$ for some $A\in \mathcal{ND}(\mathbb{N}, R)$. Denote $\psi =\varphi -\ad A$.\par
\smallskip
Step 1: We will prove that $\psi (E_{i, i+1})=x_iE_{i, i+1}$ where $x_i\in R$.\\
For any $D\in \mathfrak{d}$ we have:
\begin{displaymath}\begin{split}
\psi ([D,E_{i, i+1}])&=\psi (DE_{i,i+1}-E_{i, i+1}D)=\\
&=\psi (d_{ii}E_{i, i+1}-d_{i+1,i+1}E_{i, i+1})=\\
&=(d_{ii}-d_{i+1,i+1})\psi (E_{i, i+1}).
\end{split}\end{displaymath}
On the other hand 
\begin{displaymath}\begin{split}
\psi ([D,E_{i, i+1}])&=[D,\psi (E_{i, i+1})]+[\psi (D),E_{i, i+1}]=\\
&=D\psi (E_{i, i+1})-\psi (E_{i, i+1})D+\psi (D)E_{i, i+1}-E_{i, i+1}\psi (D).
\end{split}\end{displaymath}
 Denote $\psi (E_{i, i+1})=C$. $\psi (D)$ is a diagonal matrix so $$\psi (D)E_{i, i+1}-E_{i, i+1}\psi (D)=rE_{i, i+1}$$ where $r\in R$.
For $(j,k)\neq (i,i+1)$ we obtain equality $$(d_{ii}-d_{i+1, i+1})c_{jk}=(d_{jj}-d_{kk})c_{jk}.$$ We can assume that $d_{ii}-d_{i+1,i+1}=0$ and $d_{jj}-d_{kk}=1$, then we have $c_{jk}=0$
for $j\not=k$. On the other hand if we take $D=E_{ii}$ we obtain $c_{jj}=0$ for all $j\in\mathbb{N}$.
\par
\smallskip
Step 2: We will prove that $\psi (E_{i j})=(x_i+x_{i+1}+...+x_{j-1})E_{i j}$ for $i<j$.\\
Denote $j=i+k$. We will use mathematical induction on $k$. From the previous step we get equality when $k=1$. We assume that this property is satisfied for some $k$.
For $k+1$ we have 
\begin{displaymath}\begin{split}
\psi (E_{i,i+k+1})&=\psi ([E_{i,i+1},E_{i+1,i+k+1}])=\\
&=[E_{i,i+1},\psi (E_{i+1,i+k+1})]+[\psi (E_{i,i+1}),E_{i+1,i+k+1}]=\\
&=E_{i,i+1}\psi (E_{i+1,i+k+1})-\psi (E_{i+1,i+k+1})E_{i,i+1}+\\
&-\psi (E_{i,i+1})E_{i+1,i+k+1}-E_{i+1,i+k+1}\psi (E_{i,i+1})=\\
&=(x_{i+1}+...+x_{i+k}))E_{i,i+1}E_{i+1,i+k+1}-\\
&-(x_{i+1}+...+x_{i+k}))E_{i+1,i+k+1}E_{i,i+1}+\\
&+x_iE_{i,i+1}E_{i+1,i+k+1}-x_iE_{i+1,i+k+1}E_{i,i+1}=\\
&=(x_{i+1}+...+x_{i+k}))E_{i,i+k+1}-0+x_iE_{i,i+k+1}-0=\\
&=(x_i+x_{i+1}+...+x_{i+k})E_{i,i+k+1}.
\end{split}\end{displaymath}
\smallskip
Step 3: We will prove that $\psi (D)= r_D E$ for all $D\in \mathfrak{d}$ and $r_D\in R$.
Let  $\psi (D)=C$. For $i\in\mathbb{N}$ we have
$$\psi ([D,E_{i,i+1}])=\psi ((d_{ii}-d_{i+1,i+1})E_{i,i+1})=(d_{ii}-d_{i+1,i+1})x_i E_{i,i+1}.$$
On the other hand
\begin{displaymath}\begin{split}
\psi ([D,E_{i,i+1}])&=[D,\psi (E_{i,i+1})]+[\psi (D),E_{i,i+1}]=\\
&=D\psi (E_{i,i+1})-\psi (E_{i,i+1})D+\psi (D)E_{i,i+1}-E_{i,i+1}\psi (D)=\\
&=d_{ii}x_i E_{i,i+1}-d_{i+1,i+1}x_i E_{i,i+1}+c_{ii}E_{i,i+1}-c_{i+1,i+1}E_{i,i+1}=\\
&=(d_{ii}-d_{i+1,i+1})x_iE_{ii+1}+(c_{ii}-c_{i+1i+1})E_{ii+1}.
\end{split}\end{displaymath}
We get $c_{ii}-c_{i+1,i+1}=0$ for $i\in\mathbb{N}$, so $\psi (D)=r_D E$ where $r_D\in R$.\par
\smallskip
Step 4: We will prove that $\psi (E_{i j})=-\psi (E_{j i})$ for $i\not= j$.\\
Analogously to steps 1 and 2, we can show that $\psi (E_{j i})=rE_{ji}$ for $j>i$ and some $r\in R$. Denote $\psi (E_{i j})=x_{ij}E_{ij}$ for
$i\not= j$. We have 
$$\psi ([E_{ij},E_{ji}])=\psi (E_{ii}-E_{jj})=\psi (E_{ii})-\psi(E_{jj})=r_iE-r_jE.$$
On the other hand
\begin{displaymath}\begin{split}
\psi ([E_{ij},E_{ji}])&=\psi (E_{ij})E_{ji}-E_{ji}\psi (E_{ij})+E_{ij}\psi(E_{ji})-\psi(E_{ji})E_{ij}=\\
&=x_{ij}E_{ii}-x_{ij}E_{jj}+x_{ji}E_{ii}-x_{ji}E_{jj}.
\end{split}\end{displaymath}
In this way we obtain that  $r_i=r_j$ and $$x_{ij}E_{ii}-x_{ij}E_{jj}+x_{ji}E_{ii}-x_{ji}E_{jj}=0.$$ So $x_{ij}+x_{ji}=0$ for $i\not= j$.
\par
\smallskip
Step 5: Let $j>i+5$ and $\beta ^{(i,j)}$ be a set of matrices which can have nonzero entries only in the $i$-th row  in the columns $k\geq j$. We will prove that
$\psi (\beta ^{(i,j)})\subset \beta ^{(i,j)}$.\\
Let $B^{(i,j)}\in \beta ^{(i,j)}$. We have
\begin{displaymath}\begin{split}
\psi(B^{(i,j)})&=\psi([E_{ii},B^{(i,j)}])=[\psi(E_{ii}),B^{(i,j)}]+[E_{ii},\psi(B^{(i,j)})]=\\
&=0+[E_{ii},\psi(B^{(i,j)})]=E_{ii}\psi(B^{(i,j)})-\psi(B^{(i,j)})E_{ii},
\end{split}\end{displaymath}
so $\psi(B^{(i,j)})$ can have nonzero entries only in the $i$-th row and the  $i$-th column. For $k<j$ and $k\not=i-1$ we have
$$\psi([B^{(i,j)},E_{k,k+1}])=\psi(B^{(i,j)}E_{k,k+1}-E_{k,k+1}B^{(i,j)})=\psi(0-0)=0.$$
On the other hand
\begin{displaymath}\begin{split}
&\psi([B^{(i,j)},E_{k,k+1}])=[\psi(B^{(i,j)}),E_{k,k+1}]+[B^{(i,j)},\psi(E_{k,k+1})]=\\
&=\psi(B^{(i,j)})E_{k,k+1}-E_{k,k+1}\psi(B^{(i,j)})+\\
&+x_{k,k+1}(B^{(i,j)}E_{k,k+1}-E_{k,k+1}B^{(i,j)})=\\
&=\psi(B^{(i,j)})E_{k,k+1}-E_{k,k+1}\psi(B^{(i,j)}),
\end{split}\end{displaymath}
so $k$-th column  of the matrix $\psi(B^{(i,j)})$ have only $0$'s in every row. We get $\psi (\beta ^{(1,j)})\subseteq \beta ^{(1,j)}$ and it is enough to show that
$\psi(B^{(i,j)})_{i,i-1}=0$ for $i>1$. We shall use induction to end the proof. We have
$$\psi([B^{(i,j)},E_{i-1,i}])=\psi(B^{(i,j)}E_{i-1,i}-E_{i-1,i}B^{(i,j)})=\psi(0-E_{i-1,i}B^{(i,j)})\subset\beta ^{(i-1,j)}$$
and
\begin{displaymath}\begin{split}
\psi([B^{(i,j)},E_{i-1,i}])=\psi(B^{(i,j)})E_{i-1,i}-E_{i-1,i}\psi(B^{(i,j)})+\\
+x_{i-1,i}(B^{(i,j)}E_{i-1,i}-E_{i-1,i}B^{(i,j)})=\\
\psi(B^{(i,j)})E_{i-1,i}-E_{i-1,i}\psi(B^{(i,j)})+x_{i-1,i}(0-E_{i-1,i}B^{(i,j)}).
\end{split}\end{displaymath}
Since $$-E_{i-1,i}\psi(B^{(i,j)})+x_{i-1,i}(-E_{i-1,i}B^{(i,j)})\subseteq \beta ^{(i-1,j)}$$ 
we obtain $\psi(B^{(i,j)})E_{i-1,i}=0$ and hence $\psi(B^{(i,j)})_{i,i-1}=0$.
\par
\smallskip
Step 6: Let $N\in \mathcal{ND}(\mathbb{N}, R)$ and $\psi(N)=M$. We will prove that if $n_{ij}=0$ then $m_{ij}=0$.\\
Denote $Y=\psi ([E_{ii},N])$. We have $Y=\psi ([E_{ii},N])=\psi (E_{ii}N-NE_{ii})$. Using step 5 and step 2 we get $$y_{ij}=n_{ij}(x_i+x_{i+1}+...+x_{j-1})=0$$ for $i<j$ or $$y_{ij}=-n_{ij}(x_i+x_{i+1}+...+x_{j-1})=0$$ for $i>j$. On the other hand
\begin{displaymath}\begin{split}
\psi ([E_{ii},N])&=[E_{ii},\psi (N)]+[\psi (E_{ii}),N]=\\&
E_{ii}\psi (N)-\psi (N)E_{ii}+\psi (E_{ii})N-N\psi (E_{ii}).
\end{split}\end{displaymath}
But $\psi (E_{ii})\in \mathfrak{d}$,  hence we have $y_{ij}=m_{ij}-0+0-0=m_{ij}$.\par
\smallskip
Step 7: Let $Y\in \mathcal{ND}(\mathbb{N}, R)$ and $\psi (Y)=Z$. For $i<j$ we have $z_{ij}=y_{ij}(x_i+x_{i+1}+...+x_{j-1})$ and $z_{ji}=-y_{ji}(x_i+x_{i+1}+...+x_{j-1})$.\\
Let $D\in \mathfrak{d}$ and $d_{11}=0$, $d_{ii}=-(x_1+x_2+...+x_{i-1})$ for $i>1$. We have 
\begin{displaymath}\begin{split}
z_{ij}&=y_{ij}(x_i+x_{i+1}+...+x_{j-1})=\\
&=y_{ij}(x_1+x_2+...+x_{j-1})-y_{ij}(x_1+x_2+...+x_{i-1})=\\
&=-d_{jj}y_{ij}+d_{ii}y_{ij}=d_{ii}y_{ij}-d_{jj}y_{ij}
\end{split}\end{displaymath}
 and 
\begin{displaymath}\begin{split}
z_{ji}&=-y_{ji}(x_i+x_{i+1}+...+x_{j-1})=\\
&=-y_{ji}(x_1+x_2+...+x_{j-1})+y_{ij}(x_1+x_2+...+x_{i-1})=\\
&=d_{jj}y_{ji}-d_{ii}y_{ji}
\end{split}\end{displaymath}
 so $\psi (Y)=Z=[D,Y]$. 

This means that for all
$N\in \mathcal{ND}(\mathbb{N}, R)$ we have $\varphi (N)-[A,N]=[D,N]$ and  so $$\varphi (N)-[A,N]-[D,N]=\varphi (N)-[A+D,N]=(\varphi -ad B)(N)=0$$ where $B=A+D$. 
\end{proof}

\begin{lem}
If $\varphi$ is a derivation of $\mathfrak{gl}_{cf}(\mathbb{N}, R)$ such that   $\varphi(D)\subseteq D$ and $\varphi(\mathcal{ND}(\mathbb{N}, R))=\{0\}$ then $\varphi$ is a central derivation.
\end{lem}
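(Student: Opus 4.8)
The plan is to show that $\operatorname{Im}(\varphi)$ is contained in the scalar matrices $\mathfrak{d}_{sc}$, which lie in the center of $\mathfrak{gl}_{cf}(\mathbb{N},R)$; this is exactly what it means for $\varphi$ to be a central derivation. First I would exploit the decomposition $X = D + N$ with $D$ diagonal and $N \in \mathcal{ND}(\mathbb{N},R)$, valid for every $X \in \mathfrak{gl}_{cf}(\mathbb{N},R)$. By $R$-linearity together with the two hypotheses $\varphi(\mathfrak{d}) \subseteq \mathfrak{d}$ and $\varphi(\mathcal{ND}(\mathbb{N},R)) = \{0\}$, one obtains $\varphi(X) = \varphi(D) + \varphi(N) = \varphi(D) \in \mathfrak{d}$. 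Hence $\operatorname{Im}(\varphi) \subseteq \mathfrak{d}$, and it remains only to verify that each $\varphi(D)$ is a scalar matrix.

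The crucial step is to route both hypotheses through a single commutator. Fix a diagonal $D$ and indices $i \neq j$. Since $E_{ij} \in \mathcal{ND}(\mathbb{N},R)$ we have $\varphi(E_{ij}) = 0$, and since $[D,E_{ij}] = (d_{ii}-d_{jj})E_{ij}$ again belongs to $\mathcal{ND}(\mathbb{N},R)$ we also have $\varphi([D,E_{ij}]) = 0$. Applying the Leibniz rule, $0 = \varphi([D,E_{ij}]) = [\varphi(D),E_{ij}] + [D,\varphi(E_{ij})] = [\varphi(D),E_{ij}]$. Writing $\varphi(D) = C$, which is diagonal by hypothesis, the elementary identity $[C,E_{ij}] = (c_{ii}-c_{jj})E_{ij}$ forces $c_{ii} = c_{jj}$ for all $i \neq j$. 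Thus all diagonal entries of $\varphi(D)$ coincide, i.e. $\varphi(D) = r_D E$ for some $r_D \in R$.

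Putting the two steps together, $\varphi(X) = \varphi(D) = r_D E \in \mathfrak{d}_{sc}$ for every $X \in \mathfrak{gl}_{cf}(\mathbb{N},R)$, so $\operatorname{Im}(\varphi) \subseteq \mathfrak{d}_{sc}$. As already noted, $\mathfrak{d}_{sc}$ is contained in the center of $\mathfrak{gl}_{cf}(\mathbb{N},R)$, since a scalar matrix commutes with every matrix; therefore $\operatorname{Im}(\varphi)$ lies in the center and $\varphi$ is a central derivation. I anticipate no real obstacle in this argument: its entire force comes from the observation that evaluating $\varphi$ on $[D,E_{ij}]$ simultaneously uses $\varphi(E_{ij}) = 0$ and $\varphi([D,E_{ij}]) = 0$, collapsing the Leibniz identity to $[\varphi(D),E_{ij}] = 0$. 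The only point needing mild care is that these entrywise computations are legitimate for column-finite infinite matrices, which is immediate because every $E_{ij}$ and every commutator appearing above is a finite $R$-combination of matrix units, so no convergence question arises.
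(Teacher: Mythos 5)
Your proof is correct and follows essentially the same route as the paper's: both evaluate $\varphi$ on the commutator $[D,E_{ij}]$ (the paper restricts to consecutive indices $j=i+1$, which already suffices), use the two hypotheses to kill every term of the Leibniz identity except $[\varphi(D),E_{ij}]$, and conclude that $\varphi(D)$ is a scalar matrix, hence that $\operatorname{Im}(\varphi)\subseteq\mathfrak{d}_{sc}$ lies in the center. The only cosmetic difference is that the paper packages the resulting scalar $r_D$ as an $R$-module homomorphism $\sigma:\mathfrak{d}\to R$ inducing the central derivation, which adds nothing of substance to your argument.
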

\begin{proof}
Let  $\varphi (D)=C$. For $i\in\mathbb{N}$ we have
$$\varphi ([D,E_{ii+1}])=\varphi ((d_{ii}-d_{i+1,i+1})E_{ii+1})=0.$$
On the other hand
\begin{displaymath}\begin{split}
\varphi ([D,E_{i,i+1}])&=[D,\varphi (E_{i,i+1})]+[\varphi (D),E_{i,i+1}]=\\
&=D\varphi (E_{i,i+1})-\varphi (E_{i,i+1})D+\varphi (D)E_{i,i+1}-E_{i,i+1}\varphi (D)=\\
&=0-0+c_{ii}E_{i,i+1}-c_{i+1,i+1}E_{i,i+1}=(c_{ii}-c_{i+1,i+1})E_{i,i+1}.
\end{split}\end{displaymath}
We get $c_{ii}-c_{i+1,i+1}=0$ for $i\in\mathbb{N}$, so $\varphi (D)=r_D E$ where $r_D\in R$.\par
We have $\varphi (D)=r_D E$ where $r_D\in R$. This means that there  exists a homomorphism $\sigma : \mathfrak{d}\rightarrow R$ of
$R$-modules, defined by $\sigma (D)=r_D$. We have 
$$\varphi(A)=\varphi (D+N)=\varphi (D)=\sigma (D)E$$
 for $A\in \mathfrak{gl}_{cf}(\mathbb{N}, R)$, $D\in \mathfrak{d}$, $N\in \mathcal{ND}(\mathbb{N}, R)$. Hence $\varphi $ is the
central derivation of 
$\mathfrak{gl}_{cf}(\mathbb{N},R)$ 
induced by $\sigma $.
\end{proof}

\noindent {\sc Proof of Theorem 1.5.}

By previous Lemmas for each derivation $\varphi $ there exists $B \in \mathfrak{gl}_{cf}(\mathbb{N},R)$ such that $\varphi - \ad B=\hat{\varphi}$, 
where $\hat{\varphi} $ is central derivation, so $\varphi = \ad B+\hat{\varphi} $.
\qed

\begin{lem}
If $\varphi$ is a central derivation of a Lie algebra $L$, then $\varphi([L, L])= \{0\}$.
\end{lem}
\begin{proof}
From a definition of central derivation for all $x, y \in L$ we have $\varphi([x,y])= [\varphi(x), y] + [x, \varphi(y)]= 0+0=0$. By linearity $\varphi({L,L})= \{0\}$.
\end{proof}

\noindent {\sc Proof of Corollary 1.6.}

From Corollary $1.3.$, Theorem $1.5.$ and Lemma $3.4.$ it follows that every derivation of $\mathfrak{gl}_{cf}(\mathbb{N},K)$
is inner. It is easy to check that inner derivation $x \mapsto [y,x]$ is trivial in $\mathfrak{gl}_{cf}(\mathbb{N},K)$
if and only if $y$ belongs to the center. This implies isomorphism in thesis. \qed

\bigskip

Now we prove 
\begin{prop}
Lie algebras $\mathfrak{gl}_{cf}(\mathbb{N},R)$ i $\mathfrak{gl}_{cf}(\mathbb{Z},R)$ are isomorphic.
\end{prop}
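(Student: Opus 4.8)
The plan is to realize the isomorphism as a pure relabeling of indices induced by a bijection between the two index sets. Since both $\mathbb{N}$ and $\mathbb{Z}$ are countably infinite, I would fix any bijection $\sigma : \mathbb{N} \to \mathbb{Z}$ and define a map $\Phi : \mathfrak{gl}_{cf}(\mathbb{N},R) \to \mathfrak{gl}_{cf}(\mathbb{Z},R)$ directly on matrix entries by
$$\Phi(A)_{kl} = a_{\sigma^{-1}(k),\,\sigma^{-1}(l)} \qquad (k,l \in \mathbb{Z})$$
for $A = (a_{ij})_{i,j\in\mathbb{N}}$. On matrix units this is simply $E_{ij} \mapsto E_{\sigma(i),\sigma(j)}$; the entrywise formula is the rigorous version, since a general column-finite matrix need not be a finite combination of the $E_{ij}$.

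First I would check that $\Phi$ is well defined, that is, that $\Phi(A)$ is again column-finite. For a fixed column index $l\in\mathbb{Z}$, the set of nonzero rows of $\Phi(A)$ in that column is $\{\sigma(i) : a_{i,\sigma^{-1}(l)}\neq 0\}$, which is finite precisely because $A$ is column-finite in its column $\sigma^{-1}(l)\in\mathbb{N}$. Linearity of $\Phi$ is immediate from the entrywise definition, and $\Phi$ is a bijection with two-sided inverse given by the relabeling induced by $\sigma^{-1}$.

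The main step is to verify that $\Phi$ respects the associative product, from which compatibility with the Lie bracket $[A,B]=AB-BA$ follows at once. Recall that the product of column-finite matrices is well defined because, for fixed $j$, the entry $(AB)_{ij}=\sum_k a_{ik}b_{kj}$ is a finite sum (only finitely many $b_{kj}$ are nonzero), and $AB$ is again column-finite. Computing
$$\bigl(\Phi(A)\Phi(B)\bigr)_{kl} = \sum_{m\in\mathbb{Z}} a_{\sigma^{-1}(k),\sigma^{-1}(m)}\, b_{\sigma^{-1}(m),\sigma^{-1}(l)}$$
and substituting $p=\sigma^{-1}(m)$ converts the sum over $m\in\mathbb{Z}$ into a sum over $p\in\mathbb{N}$, giving exactly $(AB)_{\sigma^{-1}(k),\sigma^{-1}(l)}=\Phi(AB)_{kl}$. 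Hence $\Phi(AB)=\Phi(A)\Phi(B)$, so $\Phi$ is an isomorphism of associative $R$-algebras and in particular of Lie algebras.

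The only genuine subtlety, and the point I would be most careful about, is the interchange of the index relabeling with the infinite summation defining matrix multiplication. This causes no trouble: by column-finiteness of the right-hand factor $B$, each such sum has only finitely many nonzero terms, so the reindexing $m=\sigma(p)$ is legitimate term by term and no convergence question arises. Everything else is routine bookkeeping.
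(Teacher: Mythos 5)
Your proposal is correct and follows essentially the same route as the paper: both realize the isomorphism as the entry-relabeling induced by a bijection between the index sets (the paper fixes the explicit bijection $\sigma(x)=2x+1$ for $x\geq 0$, $\sigma(x)=-2x$ for $x<0$, while you rightly note any bijection works) and verify it respects the associative product, hence the bracket. Your added checks that the relabeled matrix is still column-finite and that the reindexed sum is genuinely finite are points the paper passes over silently, but they do not change the argument.
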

\begin{proof}
We recall that Lie algebra $\mathfrak{gl}_{cf}(\mathbb{Z},R)$ consists of column-finite infinite $\mathbb{Z} \times \mathbb{Z}$ matrices.

We define a function $\sigma :\mathbb{Z}\rightarrow \mathbb{N}$ as follows
$$\sigma (x)=\begin{cases} 2x+1\textrm{ dla } x\geq 0\\ -2x\textrm{\,\,\,\, \,\,dla } x< 0\end{cases}.$$
$\sigma$ is the bijection and induces the  bijection  $\phi : M_{cf}(\mathbb{Z},K)\rightarrow M_{cf}(\mathbb{N},K)$ given by
$$\phi (\sum\limits_{i,j\in\mathbb{Z}}a_{ij}E_{ij})=\sum\limits_{i,j\in\mathbb{Z}}a_{ij}E_{\sigma (i)\sigma (j)}.$$
We have
$$\phi(cA)=\sum\limits_{i,j\in\mathbb{Z}}ca_{ij}E_{\sigma (i)\sigma (j)}=c\sum\limits_{i,j\in\mathbb{Z}}a_{ij}E_{\sigma (i)\sigma (j)}=cf(A),$$
\begin{align*}
\phi(A+B)&=\sum\limits_{i,j\in\mathbb{Z}}(a_{ij}+b_{ij})E_{\sigma (i)\sigma (j)}=\\
&=\sum\limits_{i,j\in\mathbb{Z}}a_{ij}E_{\sigma (i)\sigma (j)}+\sum\limits_{i,j\in\mathbb{Z}}b_{ij}E_{\sigma (i)\sigma (j)}=\phi(A)+\phi(B),
\end{align*}
\begin{align*}
\phi(AB)&=\phi (\sum\limits_{i,j\in\mathbb{Z}}(\sum\limits_{n=-\infty}^\infty a_{in}b_{nj})E_{ij})=
\sum\limits_{i,j\in\mathbb{Z}}(\sum\limits_{n=-\infty}^\infty a_{in}b_{nj})E_{\sigma (i)\sigma (j)}=\\
&=\sum\limits_{k,m\in\mathbb{N}}(\sum\limits_{n=-\infty}^\infty a_{\sigma ^{-1}(k)n}b_{n\sigma ^{-1}(m)})E_{km}=\\
&=(\sum\limits_{k,m\in\mathbb{N}}a_{\sigma ^{-1}(k)\sigma ^{-1}(m)}E_{km})(\sum\limits_{k,m\in\mathbb{N}}b_{\sigma ^{-1}(k)\sigma ^{-1}(m)}E_{km})=\\
&=(\sum\limits_{i,j\in\mathbb{Z}}a_{ij}E_{\sigma (i)\sigma (j)})(\sum\limits_{i,j\in\mathbb{Z}}b_{ij}E_{\sigma (i)\sigma (j)})=\phi(A)\phi(B) ,
\end{align*}
and so
$$\phi([A,B])=[\phi(A),\phi(B)].$$
It means that  $\phi $ defines the isomorphism of Lie algebras $\mathfrak{gl}_{cf}(\mathbb{N},R)$ and $\mathfrak{gl}_{cf}(\mathbb{Z},R)$. 
\end{proof}


\end{document}